\documentclass[12pt,a4paper]{amsart}

\usepackage{amsmath,amsthm,amssymb}
\usepackage{float}
\usepackage{url}
\usepackage{tikz}
\usetikzlibrary{arrows,positioning,decorations,decorations.markings,matrix} 

\theoremstyle{plain}
\newtheorem{theorem}{Theorem}
\newtheorem{lemma}[theorem]{Lemma}

\newtheorem{proposition}[theorem]{Proposition}
\newtheorem{corollary}[theorem]{Corollary}
\newtheorem{definition}[theorem]{Definition}
\theoremstyle{remark}
\newtheorem{remark}[theorem]{Remark}
\numberwithin{theorem}{section}

\newcommand{\F}{\mathbb{F}}

\newcommand{\Q}{\mathbb{Q}}
\newcommand{\Z}{\mathbb{Z}}

\newcommand{\Tr}{\textnormal{Tr}}

\newcommand{\Hom}{\textnormal{Hom}}
\newcommand{\Aut}{\textnormal{Aut}}

\newcommand{\GL}{\textnormal{GL}}

\newcommand{\End}{\textnormal{End}}

\newcommand{\II}{\textsc{II}}

\newcommand{\mykill}[1]{}

\begin{document}

\bibliographystyle{plain}

\title{Automorphisms of modular lattices.}
\author{Gabriele Nebe}
\email{nebe@math.rwth-aachen.de}
\address{Lehrstuhl D f\"ur Mathematik, RWTH Aachen University, 52056 Aachen, Germany}

\begin{abstract}
	The methods to classify extremal unimodular lattices with 
	given automorphisms are extended to the situation of modular 
	lattices. 
	A slightly more general notion than the type from the
	PhD thesis \cite{Juergens} is the det-type. 
	The det-type of an automorphism on $L$ determines the
	one of all partial dual lattices of $L$. This easy observation allows 
	to exclude quite a few det-types of automorphisms left
	open in \cite{Juergens}. Passing to suitable 
	$p$-maximal lattices, 
 extremal $\ell $-modular lattices of composite level 
	$\ell = 14$ and $\ell =15$ of dimension $12$ and the ones 
	of level $\ell =6$ and dimension $16$ are classified. 
\end{abstract}

\maketitle

\section{Introduction}

The study of densest sphere packings in 
Euclidean space is a classical mathematical problem. 
Whereas trivial in dimension 1 and easy in dimension 2 
this is already a very hard problem in dimension 3, known as the Kepler problem.
The (uncountable many) densest packings in Euclidean 3-space 
have been classified only 15 years ago by Thomas Hales \cite{Hales}. 
The sphere-packing problem becomes much easier if one restricts to regular (or lattice) 
packings, where the centers of the spheres form a finitely
generated subgroup of Euclidean space, a so called {\em lattice}. 
The densest lattices are known up to dimension 8 and in dimension 24 
(see \cite{CohnKumar}). The papers \cite{Viazovska} and \cite{ViaCohn} 
prove that the lattice packings in dimension 8 and 24 realize in fact 
the densest sphere packings in their respective dimensions. 
The underlying lattices, the famous $E_8$ lattice and the Leech lattice $\Lambda _{24}$,
are extremal even unimodular lattices. 
Even unimodular lattices are positive definite regular integral quadratic forms. 
They exist only when the dimension is a multiple of $8$. 
A full classification is known in dimension 8, 16 and 24, see Section \ref{known}
below. 
The theory of modular forms allows to show that the minimum of an
even unimodular lattice of dimension $n$ is bounded from above by 
$2+2\lfloor \frac{n}{24} \rfloor $, lattices achieving equality are 
called extremal. As the minimum of a unimodular lattice
determines its density, the extremal lattices are the densest even unimodular lattices 
in their dimension.
In the jump dimensions, the multiples of 24, one knows only 6 extremal lattices,
the Leech lattice $\Lambda _{24}$, the unique 24-dimensional unimodular lattice 
of minimum $4$, four lattices, $P_{48q}$, $P_{48p}$ \cite{SPLAG}, $P_{48n}$
\cite{cycloquat}, $P_{48m}$ \cite{Nebe48}, 
in dimension 48 and one lattice, $\Gamma _{72}$ \cite{Nebe72}, of dimension 72. 
In \cite{Nebeautext} the author started a long term project to classify extremal lattices
with given symmetry which also led to the discovery of $P_{48m}$. 
The thesis \cite{Juergens} applied the 
techniques from \cite{Nebeautext} to the more general situation of 
extremal $\ell $-modular lattices (see Definition \ref{modular}).

The present paper aims to give a few easy more general techniques 
for the use of automorphisms to classify such extremal lattices. 
We try to be very brief and not to overload the paper with definitions.
The interested reader is referred to the textbooks 
\cite{Martinet} (for more geometric properties of lattices), 
\cite{Ebeling} (for the relations between lattices and modular forms), 
\cite{Kneser} and \cite{OMeara} (for the arithmetic theory of quadratic forms)
and also the famous collection \cite{SPLAG}. 
The most important notions are given 
in Section \ref{lattices} which also lists the current state
of knowledge on extremal lattices in Section \ref{known}. 
One highlight of the paper is Section \ref{composite}.
Passing to the maximal lattice at a 
suitable prime divisor of $\ell $ allows to classify certain extremal 
$\ell $-modular lattices of composite level $\ell $ in a few minutes computer 
calculations, thus providing new complete classifications to Section \ref{known}. 

The main notion to deal with automorphisms is the one of the {\em type} 
of an automorphism $\sigma $ of prime order $p$ of the lattice $L$ introduced in 
\cite{Nebeautext} (see Section \ref{type}).
It is independent of the quadratic form on $L$ and determines the 
$\Z_p[\sigma ]$-module structure of $\Z_p\otimes _{\Z} L$. 

A finer information than the type is the {\em det-type}, introduced 
in \cite{Juergens}, where it is called 
type.
The det-type of a lattice determines the det-type of all its partial 
dual lattices (Theorem \ref{main1}). For extremal strongly $\ell $-modular lattices 
all these partial dual lattices are again extremal. This observation
allows to exclude quite a few possibilities for det-types of 
automorphisms that are left open in \cite{Juergens}, see Section \ref{dim36} for 
examples.

\section{Lattices}\label{lattices}

Throughout this paper let $(V,Q) $ be a positive definite 
rational quadratic space of dimension $n$. 
So $Q:V\to \Q $ is a rational quadratic form such that 
$Q(v) \geq 0 $ for all $v\in V$ with equality if and only if $v=0$.
The associated bilinear form will be denoted by $(,)$, so
$$(x,y) = Q(x+y) - Q(x) - Q(y) \mbox{ for all } x,y\in V .$$
A {\em lattice} $L$ in $V$ is a finitely generated 
subgroup of $V$ of full rank. As $\Z $ is a principal ideal domain,  there hence 
exists a basis $B:=(b_1,\ldots , b_n)$ of $V$ such 
$$L= \{ \sum _{i=1}^n a_i b_i \mid a_i \in \Z \mbox{ for all } 1\leq i \leq n\}. $$
The basis $B$ is then called a lattice basis of $L$ and its 
Gram matrix $ ((b_i,b_j)) _{1\leq i , j \leq n}$ a Gram matrix of $L$.
The {\em determinant} of $L$ is the determinant of any of its Gram matrices and 
the {\em  minimum} of $L$ is 
$$\min (L) := \min \{ (\lambda ,\lambda )   \mid 0\neq \lambda \in L \} .$$
The lattice $L$ is called {\em even}, if $Q(\lambda) = \frac{1}{2} (\lambda, \lambda ) \in \Z $
for 
all $\lambda \in L$. 
The {\em dual lattice} of $L$ is 
$$ L^{\#} := \{ v\in V \mid (v,\lambda ) \in \Z \mbox{ for all } \lambda \in L \}. $$ 
The dual basis of any lattice basis of $L$ is a lattice basis of $L^{\#}$.
The {\em level} $\ell $ of an even lattice $L$ is the minimal natural number 
$\ell $ such that the rescaled dual lattice 
$$^{(\ell )} L^{\# } := (L^{\#}, \ell (,) )$$ 
is an even lattice.
In particular even lattices of level 1 are precisely the 
even unimodular lattices, they satisfy $L=L^{\#} $.

The following result is well known (see for instance \cite[Proposition 1.3.4]{Martinet}):

\begin{proposition} \label{dualproj} 
	Let $L$ be a lattice in  $V$,
 $U\leq V$ and $\pi_U \in \End(V) $ the 
	orthogonal projection onto $U$ (with kernel $U^{\perp }$). 
	Put $L_U := L\cap U$.
	Then $L_U$ is a lattice in $U$ with $(L_U)^{\#} = \pi_U(L^{\#}) $.
\end{proposition}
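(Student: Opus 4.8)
The plan is to prove the two inclusions $\pi_U(L^\#)\subseteq (L_U)^\#$ and $(L_U)^\#\subseteq \pi_U(L^\#)$ separately, where throughout $(L_U)^\#$ denotes the dual of $L_U$ inside the quadratic space $(U,Q|_U)$, the bilinear form on $U$ being the restriction of $(,)$. First I would record the two elementary facts used below. (i) Since $L$ has full rank in $V$ and $U$ is a $\Q$-subspace, every $u\in U$ satisfies $mu\in L$ for some positive integer $m$, so $L_U=L\cap U$ has full rank in $U$ and is a lattice there; moreover $L_U$ is primitive in $L$, because if $m\lambda\in L_U$ for a positive integer $m$ and $\lambda\in V$ then $\lambda\in L$ and $\lambda\in U$. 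Hence $L/L_U$ is torsion-free, and by the standard fact that a saturated subgroup of a free abelian group is a direct summand, a lattice basis $(b_1,\dots,b_k)$ of $L_U$ extends to a lattice basis $(b_1,\dots,b_n)$ of $L$. (ii) The orthogonal projection $\pi_U$ is self-adjoint for $(,)$ and restricts to the identity on $U$: for $x,y\in V$ one has $(\pi_U(x),y)=(\pi_U(x),\pi_U(y))=(x,\pi_U(y))$, and $\pi_U(u)=u$ for $u\in U$.

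For $\pi_U(L^\#)\subseteq (L_U)^\#$: given $v\in L^\#$ and $\lambda\in L_U$, fact (ii) gives $(\pi_U(v),\lambda)=(v,\pi_U(\lambda))=(v,\lambda)\in\Z$, the last step because $v\in L^\#$ and $\lambda\in L$. Since also $\pi_U(v)\in U$, this says exactly that $\pi_U(v)\in (L_U)^\#$.

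For the reverse inclusion I would produce an explicit lattice basis of $(L_U)^\#$ lying in $\pi_U(L^\#)$. Let $(b_1,\dots,b_n)$ be the basis of $L$ from (i) and let $(b_1^\#,\dots,b_n^\#)$ be its dual basis of $V$, which by the remark preceding Proposition \ref{dualproj} is a lattice basis of $L^\#$, so $(b_i^\#,b_j)=\delta_{ij}$. For $1\le i\le k$ the vector $\pi_U(b_i^\#)$ lies in $U$, and for $1\le j\le k$, using (ii) and $b_j\in U$, one gets $(\pi_U(b_i^\#),b_j)=(b_i^\#,\pi_U(b_j))=(b_i^\#,b_j)=\delta_{ij}$. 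As $Q|_U$ is nondegenerate (being positive definite) and $(b_1,\dots,b_k)$ is a $\Q$-basis of $U$, the vectors $\pi_U(b_1^\#),\dots,\pi_U(b_k^\#)$ are the unique basis of $U$ dual to $(b_1,\dots,b_k)$, hence a lattice basis of $(L_U)^\#$. Therefore $(L_U)^\#=\sum_{i=1}^k \Z\,\pi_U(b_i^\#)\subseteq \pi_U(L^\#)$, and combining the two inclusions yields $(L_U)^\#=\pi_U(L^\#)$.

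The routine ingredients are the self-adjointness computation in (ii) and the primitivity of $L_U$ in $L$. The step that really carries the argument is the reverse inclusion: there primitivity of $L_U$ is exactly what allows a lattice basis of $L_U$ to be completed to one of $L$, which in turn forces the projected dual basis to be precisely the dual basis of $L_U$ in $U$. Beyond keeping track that all duals are taken in $U$ rather than in $V$, I expect no serious obstacle.
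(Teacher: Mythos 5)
The paper does not actually prove Proposition \ref{dualproj}: it is quoted as well known with a reference to \cite[Proposition 1.3.4]{Martinet}, so there is no in-paper argument to compare yours with. Your proof is correct and is essentially the standard one: the inclusion $\pi_U(L^{\#})\subseteq (L_U)^{\#}$ from self-adjointness of $\pi_U$, and the reverse inclusion by extending a lattice basis of $L_U=L\cap U$ to one of $L$ (possible because $L/L_U$ is torsion-free, so $L_U$ is a direct summand) and checking that the projections $\pi_U(b_1^{\#}),\dots,\pi_U(b_k^{\#})$ are exactly the dual basis of $(b_1,\dots,b_k)$ in $(U,Q|_U)$. One sentence needs repair: in justifying primitivity you take $\lambda\in V$ with $m\lambda\in L_U$ and assert $\lambda\in L$, which is false in general (take $\lambda=\tfrac12 b_1$, $m=2$); the statement you need, and which suffices for torsion-freeness of $L/L_U$, is for $\lambda\in L$: then $m\lambda\in U$ forces $\lambda\in U$, hence $\lambda\in L\cap U=L_U$. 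With that wording fixed the argument is complete.
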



\subsection{Genera of lattices and the mass formula}

Two lattices $L,M$ in $(V,Q)$ are called {\em isometric},
if there is an orthogonal mapping 
$$\sigma \in O(V,Q) := \{ \sigma \in \GL(V) \mid Q(\sigma(x)) = Q(x) \mbox{ for all } x\in V \} $$ with 
$\sigma (L) = M$. The stabilizer of $L$ in $O(V,Q) $ is called the
{\em automorphism group} $\Aut(L)$. This is always a finite unimodular group. 

The lattices $L$ and $M$ in $V$ are said to be {\em in the same genus}, if 
the $p$-adic completions $\Z_p\otimes _{\Z} L$ and $\Z_p \otimes _{\Z} M$ 
are isometric for all primes $p$. 
A genus of lattices contains only finitely many isometry classes,
if $L_1, \ldots , L_h$ represent these classes then 
$$\sum _{i=1}^h |\Aut(L_i)|^{-1} $$
is called the {\em mass of the genus}. This rational number can be calculated 
directly from the local invariants of the genus, without knowing 
the $L_i$. As $|\Aut(L_i) | \geq 2$ for all $i$, 
the class number, $h$, is always at least twice the mass of the genus. 
For more details we refer to \cite[Section 102]{OMeara}
or \cite[Chapter VII-X]{Kneser}. 
The latter reference, \cite[Section 28]{Kneser}, also describes an 
algorithm, the Kneser-neighbor-algorithm,  that is used to enumerate 
all isometry classes of lattices in a genus. 
This algorithm is available in {\sc Magma} \cite{Magma}.

\subsection{Extremal strongly modular lattices} \label{modular}
Let $\ell $ be a square-free integer and let $L$ be an even lattice of level $\ell $.
For a divisor $d$ of $\ell $, the 
 the partial dual lattice, $L^{\#,d}$  is defined as 
$L^{\#,d} := L^{\#} \cap \frac{1}{d} L$. 

The lattice $L$  is called {\em strongly $\ell $-modular},
if $L$ is isometric to all its rescaled partial dual lattices, 
i.e. for all $d\mid \ell $ we have $L\cong \ ^{(d)} L^{\#,d }$. 

The notion of strongly $\ell $-modular lattices generalizes the one of 
$p$-modular lattices ($p$ a prime) from \cite{Quebbemann} 
and was introduced in 
\cite{Quebbemann2}.  In this paper Quebbemann shows the following 
generalization of Siegel's \cite{Siegel} result for unimodular lattices.

\begin{theorem} \cite{Quebbemann2}
Let $\ell $ be a square-free integer such that the 
divisor sum $\sigma _1(\ell ):= \sum _{d\mid \ell } d $ 
divides $24$ and let $L$ 
be a strongly $\ell$-modular lattice of dimension $n$.
Then
        $$\min (L) \leq 2+ 2 \lfloor \tfrac{n\sigma_1(\ell)}{24 \sigma_0(\ell)} \rfloor $$
where $\sigma_0(\ell ) := \sum_{d\mid \ell } 1$ is the number of
divisors of $\ell $.
        Strongly $\ell$-modular lattices achieving equality are called \emph{extremal}. 
\end{theorem}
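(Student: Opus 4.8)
The plan is to run Siegel's theta-series argument, in the form adapted to the Fricke group in \cite{Quebbemann2}. Attach to the even lattice $L$ its theta series
\[
 \theta_L(z)=\sum_{\lambda\in L}e^{2\pi i z\,Q(\lambda)}=\sum_{m\ge 0}a_m q^m,\qquad q=e^{2\pi i z},
\]
so that $a_m=\#\{\lambda\in L:Q(\lambda)=m\}$; then $a_0=1$, $a_m=0$ for $1\le m<\tfrac12\min(L)$, and $a_{\min(L)/2}>0$. Hence it suffices to exhibit a non-zero Fourier coefficient $a_m$ with $1\le m\le 1+\lfloor\tfrac{n\sigma_1(\ell)}{24\sigma_0(\ell)}\rfloor$.

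First I would show $\theta_L\in M_{n/2}(\Gamma_0(\ell)^+)$, where $\Gamma_0(\ell)^+=\langle\,\Gamma_0(\ell),\,W_d\ (d\,\|\,\ell)\,\rangle$ is generated by $\Gamma_0(\ell)$ and all Atkin--Lehner involutions. Invariance under $\Gamma_0(\ell)$ with trivial character is the classical theta transformation formula, using that $L$ has level $\ell$ and that $L\isom {}^{(\ell)}L^{\#,\ell}$ forces $\det L=\ell^{\,n/2}$. For an exact divisor $d\,\|\,\ell$, Poisson summation applied to $L$ and to its partial dual $L^{\#,d}$ (identifying partial duals of sublattices $L\cap U$ via Proposition \ref{dualproj}) gives $\theta_L|_{n/2}W_d=\theta_{{}^{(d)}L^{\#,d}}$, the normalizing scalar being $1$ precisely because $\det L=\ell^{\,n/2}$; strong $\ell$-modularity, $^{(d)}L^{\#,d}\isom L$, then yields $\theta_L|_{n/2}W_d=\theta_L$. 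Holomorphy at the cusps is automatic, since $\theta_L$ and its Atkin--Lehner transforms are again lattice theta series, hence bounded on vertical strips.

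Next I would control $M_{n/2}(\Gamma_0(\ell)^+)$. For squarefree $\ell$ one has $[\PSL_2(\Z):\overline{\Gamma_0(\ell)}]=\prod_{p\mid\ell}(p+1)=\sigma_1(\ell)$ and $[\Gamma_0(\ell)^+:\Gamma_0(\ell)]=\sigma_0(\ell)$, hence $\mathrm{vol}(\Gamma_0(\ell)^+\backslash\HH)=\tfrac{\sigma_1(\ell)}{\sigma_0(\ell)}\cdot\tfrac{\pi}{3}$; moreover the Atkin--Lehner group permutes the $\sigma_0(\ell)$ cusps of $\Gamma_0(\ell)$ transitively, so $\Gamma_0(\ell)^+$ has a single cusp, of width $1$ at $\infty$. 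The valence formula then says that for $0\ne f\in M_k(\Gamma_0(\ell)^+)$ the sum of the orders of vanishing of $f$ (at $\infty$, at elliptic points, and in the interior) equals $\tfrac{k}{12}\cdot\tfrac{\sigma_1(\ell)}{\sigma_0(\ell)}$; since these orders are non-negative, $f$ vanishes at $\infty$ to order at most $\lfloor\tfrac{k\sigma_1(\ell)}{12\sigma_0(\ell)}\rfloor$, and with $k=n/2$, $\dim M_k=1+\dim S_k$, and an echelon argument in $S_k$ this gives $d:=\dim M_{n/2}(\Gamma_0(\ell)^+)\le 1+\lfloor\tfrac{n\sigma_1(\ell)}{24\sigma_0(\ell)}\rfloor$. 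To finish, I would use that the linear map $M_{n/2}(\Gamma_0(\ell)^+)\to\mathbb{C}^{\,d}$, $f\mapsto(a_1,\dots,a_d)$, is injective, i.e. no non-zero form of weight $n/2$ has $a_1=\dots=a_d=0$. As $\theta_L\ne0$ (since $a_0=1$), one of $a_1,\dots,a_d$ is non-zero, so $\tfrac12\min(L)\le d\le1+\lfloor\tfrac{n\sigma_1(\ell)}{24\sigma_0(\ell)}\rfloor$, which is the asserted bound; the extremal lattices are exactly those for which $\theta_L$ equals the unique ``extremal'' form, the one with $a_0=1$ and $a_1=\dots=a_{d-1}=0$.

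Two places carry the real content. The first is the Atkin--Lehner transformation law above: pinning down the normalizing constant and matching $\theta_L|_{n/2}W_d$ with $\theta_{{}^{(d)}L^{\#,d}}$ is a careful Poisson-summation computation, though conceptually routine. The second, and the one I expect to be the main obstacle, is the injectivity claim, equivalently the statement that the extremal form has $a_d\ne0$, so that the bound is attained at $d$ and not beyond. This is exactly where the hypothesis $\sigma_1(\ell)\mid24$ is indispensable: it is the precise condition under which $\Gamma_0(\ell)^+$ has genus zero with a single cusp and the graded ring $M_*(\Gamma_0(\ell)^+)$ is governed by a cusp form of weight $\tfrac{12\sigma_0(\ell)}{\sigma_1(\ell)}$ that is nowhere zero on $\HH$ -- the analogue of $\Delta$ -- so that a dimension/echelon count pins down $M_*(\Gamma_0(\ell)^+)$ tightly enough to force $a_d\ne0$. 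Since only finitely many $\ell$ satisfy $\sigma_1(\ell)\mid24$ (namely $\ell\in\{1,2,3,5,6,7,11,14,15,23\}$), in the worst case this last structural input can be verified $\ell$ by $\ell$ from explicit generators of $M_*(\Gamma_0(\ell)^+)$.
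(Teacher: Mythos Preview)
The paper does not give its own proof of this theorem: it is stated with the attribution \cite{Quebbemann2} and no proof environment follows. So there is nothing in the paper to compare against; your sketch is precisely the argument of Quebbemann that the paper is citing.

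Your outline is correct in its architecture and you locate the genuine difficulty accurately: the content is not the valence bound $\dim M_{n/2}(\Gamma_0(\ell)^+)\le d:=1+\lfloor n\sigma_1(\ell)/(24\sigma_0(\ell))\rfloor$, but the claim that the extremal form has $a_d\ne0$, and this is exactly where $\sigma_1(\ell)\mid 24$ enters via the genus-zero structure of $\Gamma_0(\ell)^+$ and the existence of a weight-$12\sigma_0(\ell)/\sigma_1(\ell)$ analogue of $\Delta$. Two small comments. First, your phrasing ``the map $f\mapsto(a_1,\dots,a_d)$ is injective, so since $\theta_L\ne0$ some $a_i\ne0$'' is logically fine but a bit roundabout; it is cleaner to say that $f\mapsto(a_0,\dots,a_{d-1})$ is injective by the valence bound, hence the extremal form $F$ with $(a_0,\dots,a_{d-1})=(1,0,\dots,0)$ is unique, and the remaining task is $a_d(F)\ne0$. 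Second, the appeal to Proposition~\ref{dualproj} in the Atkin--Lehner step is not quite to the point: that proposition concerns orthogonal projections onto subspaces, whereas the Poisson-summation identity $\theta_L|_{n/2}W_d=\theta_{{}^{(d)}L^{\#,d}}$ uses the partial dual $L^{\#,d}=L^{\#}\cap\frac{1}{d}L$ directly, together with $\det L=\ell^{n/2}$, and needs no projection. Neither point affects the validity of the approach.
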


As $\sigma _1(\ell ) $ divides 24, the number 
$J(\ell):= \frac{24 \sigma_0(\ell )}{\sigma _1(\ell )}$  is an integer. 
The extremal $\ell $-modular lattices where the dimension $n$ is a multiple 
of $J(\ell )$ (these dimensions are also called the \emph{jump dimensions})
are of particular interest, as their minimum is strictly bigger than the
one of smaller dimensional $\ell $-modular lattices. 

For a survey of the relation between lattices, modular forms and spherical 
designs the reader might want to consider my two articles 
\cite{NebeJahresbericht} and \cite{NebeVenkovsurv}.

\subsection{The known extremal lattices} \label{known}

As mentioned in the introduction, even unimodular lattices only exist in dimensions
a multiple of 8. Up to dimension 24 all even unimodular lattices are classified, 
for higher dimensions, the mass formula (\cite{SiegelI}, \cite{SiegelIII}, see also 
\cite[Chapter 16]{SPLAG}) 
gives a lower bound for the number of such lattices. 
Oliver King \cite{King} refined this mass formula to count lattices of minimum $>2$, 
which improves these bounds and also provides lower bounds on the number 
of extremal lattices in dimension 32. 
For dimension 40-80 we applied the Minkowski-Siegel mass formula to obtain the rough lower bounds in the table below. 
In the jump dimensions, the multiples of $J(1)=24$, 
we list the number of known extremal lattices. 
In the other dimensions 
the symbol $\exists $
indicates that there are explicit extremal lattices known.

\begin{center}
\begin{tabular}{|c|c|c|@{}c@{}|@{}c@{}|@{}c@{}|@{}c@{}|@{}c@{}|@{}c@{}|@{}c@{}|@{}c@{}|@{}c@{}|}
	\hline
	$n$ & 8 & 16 & 24 & 32 & 40 &  48 & 56 & 64 & 72 & 80   \\
	\hline
	min(L) & 2 & 2 &  4 & 4 & 4   & 6 & 6 & 6 & 8  &  8   \\
	\hline
	$\# $ & 1 & 2 & 24 & $\geq 10^9$  & $ \geq 10^{51} $ & $\geq 10^{121} $  & $\geq 10^{219}$ & $\geq 10^{346} $ & $\geq 10^{506}$ & $\geq 10^{700}$  \\
	\hline
	$\# $ ext & 1 & 2 & 1 & $\geq 10^7$  & $\exists$ & $\geq 4$  & $\exists $ & $\exists $ & $\geq 1$ & $\exists $  \\
	\hline
\end{tabular}
\end{center}

The following table displays the known classification 
of strongly $\ell $-modular lattices for the relevant values of $\ell \geq 2$.
The entries in the table either display the exact number of all 
extremal lattices or a lower bound. 
A ``-'' sign indicates that the extremal modular form has a negative coefficient,
so no such extremal lattice exists. If an entry is empty, then there are no 
strongly $\ell $-modular lattices of the given dimension. 
We also use different colours to indicate the extremal minimum,
\color{blue}{min=2,10}, \color{black}{min=4,12}, {\color{red}{min=6,14}}, {\color{cyan}{min=8,16}} where, of course, the minimum increases downwards in a column.

\begin{center}
	Table of known extremal even strongly modular lattices of 
	levels $\ell = 2,3,5,6,7,11,14,15,23$ and dimension $\geq 48$. \\
\begin{tabular}{|r|c|c|c|c|c|c|c|c|c|}
	\hline
	$J(\ell) $ &  16 & 12 & 8 & 8 & 6 & 4 & 4 & 4 & 2 \\
	$\ell $ & 2 & 3 & 5 & 6 & 7 & 11 & 14 & 15 & 23 \\
\hline
2 & & \color{blue}{1} & & & \color{blue}{1} & \color{blue}{1} & & & 1  \\
4  & \color{blue}{1}  & \color{blue}{1}  & \color{blue}{1}  & \color{blue}{1}  & \color{blue}{1}  & {1}  & {1}  & {1}  &\color{red}{1}   \\
6 & & \color{blue}{1} & & & {1} & {1} & & & \color{cyan}{-}  \\
8  & \color{blue}{1}  & \color{blue}{2}  & {1}  & {1}  & {1}  & \color{red}{1}  & \color{red}{1}  & \color{red}{2}   & \color{blue}{-}  \\
10 & & \color{blue}{3} & & & {4} & \color{red}{2} & &  & {-}  \\
12  & \color{blue}{3}  & {1}  & {4}  & {10}  & \color{red}{0}  & {\color{cyan}{0}}*  & \color{cyan}{1}  & \color{cyan}{3}  & \color{red}{-}  \\
14 & & {1} & & & \color{red}{1} & {\color{cyan}{?}}a & &   & \color{cyan}{-} \\
16 & 1  & 6  & {\color{red}{1}}*  & \color{red}{8}  & \color{red}{$\geq $18}  & \color{blue}{-}  & \color{blue}{?}  & \color{blue}{$\geq $1}   & \color{blue}{-}  \\
18  & & 37 & & & {\color{cyan}{0}}* & \color{blue}{?} & &  & {-} \\
20  & {3}*  & {$\geq $100}  & \color{red}{$\geq $200}a  & \color{red}{$\geq $13}  & \color{cyan}{$\geq $1}a  & {-}  & {?}  & {?}  & \color{red}{-}  \\
22 & & {$\geq 10^3$} & & & {\color{cyan}{?}}a & {?} & &  & \color{cyan}{-} \\
24  & $\geq $8  & \color{red}{$\geq $1}a  & \color{cyan}{$\geq $1}a  & \color{cyan}{$\geq $2}  & {\color{blue}{0}}*  & \color{red}{-}  & \color{red}{?}  & \color{red}{?} & \color{blue}{-}  \\
26 & & \color{red}{$\geq$6} & & & \color{blue}{?} & \color{red}{-} & & & {-}   \\
28  & $\geq ${24}  & \color{red}{$\geq $9}  & \color{cyan}{$\geq $1}  & \color{cyan}{?}  & \color{blue}{?}  & \color{cyan}{-}  & \color{cyan}{?}  & \color{cyan}{?} & \color{red}{-}  \\
30 & & \color{red}{$\geq $2} & & & {-} & \color{cyan}{-} & & & \color{cyan}{-}  \\
32 & \color{red}{$\geq $7}a  & \color{red}{$\geq $33}  & \color{blue}{?}  & \color{blue}{?}  & ?  & \color{blue}{-}  & \color{blue}{-}  & \color{blue}{-}   & \color{blue}{-} \\
34 & & \color{red}{$\geq$100} & & & {?} & \color{blue}{-} & &  & {-}  \\
36  & \color{red}{$\geq $3}  & {\color{cyan}{?}}a  & \color{blue}{?}  & \color{blue}{?}  & \color{red}{-}  & {-}  & {-}  & {-} & \color{red}{-}   \\
38 & & \color{cyan}{?} & & & \color{red}{?} & {-} & &  & \color{cyan}{-}  \\
40 & \color{red}{$\geq $6}  & \color{cyan}{$\geq $1}  & {?}  & {?}  & \color{red}{?}  & \color{red}{-}  & \color{red}{-}  & \color{red}{-}   & \color{blue}{-}  \\
42 & & \color{cyan}{?} & & & \color{cyan}{-} & \color{red}{-} & & & {-}   \\
44  & \color{red}{$\geq $1}  & \color{cyan}{?}  & {?}  & {?}  & \color{cyan}{-}  & \color{cyan}{-}  & \color{cyan}{-}  & \color{cyan}{-} & \color{red}{-}  \\
46 & & \color{cyan}{?} & & & \color{cyan}{?} & \color{cyan}{-} & &  & \color{cyan}{-}  \\
48 & \color{cyan}{$\geq $6}  & \color{blue}{?}  & \color{red}{?}  & \color{red}{?}  & \color{blue}{-}  & \color{blue}{-}  & \color{blue}{-}  & \color{blue}{-}  & \color{blue}{-}  \\
\hline
\end{tabular}
\end{center}

This table is available in the Catalogue of Lattices \cite{LatDB}.

The complete classification results in this table are mostly obtained
by a complete enumeration of the 
strongly $\ell $-modular genus (see \cite{ScharlauHemkemeier}, \cite{SchaSchuPi}).
For certain medium size dimensions, the class number of this genus 
is far too high to enumerate all lattices.
Nevertheless a clever application of  modular forms 
(see \cite{BachocVenkov}) allows to construct all extremal lattices
in the cases marked by $*$. 
An ``a'' indicates that \cite{Juergens} gives restrictions on 
the order of the automorphism group of such lattices. 
  For more 
precise results see below 
or the thesis \cite{Juergens}.

The classification of the extremal strongly 6-modular lattices of dimension 16
and the one of the extremal strongly 14- and 15-modular lattices of dimension 12 is
described in Section \ref{composite} below. 
In particular the classification of extremal strongly $\ell $-modular lattices 
is complete up to dimension $m$ for $(\ell , m)$ as in the following table:

\begin{center}
\begin{tabular}{|r|c|c|c|c|c|c|c|c|c|c|}
	\hline
	$\ell $ & 1  & 2 & 3 & 5 & 6 & 7 & 11 & 14 & 15 & 23  \\
\hline
m & 24 & 20 & 18 & 16 & 16 & 14 & 12 & 12 & 12 & $\infty $ \\ 
\hline
\end{tabular}
\end{center}

In the jump dimensions there is a complete classification of all 
extremal $\ell $-modular lattices for $\ell = 7, 11,$ and $23$.

\section{Maximal lattices} \label{max}

A lattice $L$ in $(V,Q)$ is called {\em maximal} if 
$L$ is even (i.e. $Q(L) \subseteq \Z $) and $L$ is maximal with this property,
i.e. all proper overlattices $M$ of $L$ satisfy 
 $Q(M) \not\subseteq \Z $.
 It is well known (see \cite[Section 82H]{OMeara}) that the set of maximal lattices
 in $(V,Q)$ forms a single genus of lattices. 
 In general this maximal genus has the smallest possible mass among all 
 genera of lattices in $(V,Q)$, and one strategy to find all lattices in 
 a given genus is to construct them as sublattices of the ones in the maximal genus.
 A related strategy is applied in Section \ref{composite} below. 

Corresponding maximality questions in presence of a finite group $G$
are treated in \cite{Morales}. In particular \cite[Lemma (1.2)]{Morales} 
shows that a maximal even $G$-lattice $(M,Q)$ in $(V,Q)$ has a 
semisimple anisotropic discriminant group, so 
$M^{\#}/M $ is a direct sum of simple $\Z G$-modules and 
$\overline{Q}: M^{\#}/M \to \Q/\Z  $ defined by
$\overline{Q}(v+M) := Q(v) + \Z $ does not vanish on any of the non-zero submodules
of $M^{\#}/M$. 
If $G$ is a $p$-group then the simple $\F_pG$-modules are trivial, 
so in this case $(\F_p\otimes _{\Z } M^{\#} / M,\overline{Q}) $ is
an anisotropic quadratic $\F_p$-space allowing to conclude the following theorem.

\begin{theorem}\label{Thmax}
	Let $L$ be an even lattice and 
 $\sigma \in \Aut(L)$ of prime order $p$.
	Then there is a $\sigma $-invariant overlattice $M$ containing $L$ of
	$p$-power index such that $\Z_p \otimes _{\Z} M$ is a 
	maximal lattice in $\Q_p \otimes _{\Q} V$.
\end{theorem}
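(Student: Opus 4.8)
The plan is to build the overlattice $M$ prime-by-prime only at $p$, leaving $L$ unchanged at every other prime, and to produce it as a fixed point of a maximal $\langle\sigma\rangle$-invariant lattice. First I would work locally: set $L_p := \Z_p \otimes_\Z L$, a $\sigma$-invariant lattice in the quadratic space $V_p := \Q_p \otimes_\Q V$. I want a $\sigma$-invariant even overlattice $M_p \supseteq L_p$ of finite ($p$-power) index in $V_p$ that is maximal among even lattices in $V_p$. Given such an $M_p$, one defines $M$ to be the unique lattice in $V$ with $\Z_p \otimes_\Z M = M_p$ and $\Z_q \otimes_\Z M = \Z_q \otimes_\Z L$ for all primes $q \neq p$; concretely $M := V \cap M_p \cap \bigcap_{q\neq p}(\Z_q\otimes_\Z L)$, or more simply the lattice with $M/L \cong M_p/L_p$ sitting inside $\tfrac1{p^k}L/L$. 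Since $\sigma$ acts on all the local data compatibly, $M$ is $\sigma$-invariant; since $M/L$ is a $p$-group, the index is a $p$-power; and $M$ is even because evenness is a local condition and holds at $p$ (where $M_p$ is even) and at every $q\neq p$ (where $M_q = L_q$ is even as $L$ is even).

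The heart of the matter is thus the local existence of $M_p$. Here I would invoke the $G$-maximality machinery of Morales quoted just above, with $G = \langle\sigma\rangle$, a cyclic group of order $p$. Among all $\langle\sigma\rangle$-invariant even overlattices of $L_p$ inside $V_p$ there is (by finiteness of $M_p^\#/M_p$ and a straightforward Zorn/maximality argument, since these overlattices sit between $L_p$ and $\tfrac1{p}L_p^\#$-type bounds forced by evenness) a maximal one, call it $M_p$. By the cited \cite[Lemma (1.2)]{Morales}, $M_p^\#/M_p$ is a direct sum of simple $\Z_p\langle\sigma\rangle$-modules on which the induced $\Q/\Z$-valued quadratic form $\overline Q$ is anisotropic in the sense described. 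Because $\langle\sigma\rangle$ is a $p$-group, every simple $\F_p\langle\sigma\rangle$-module is the trivial module, so $M_p^\#/M_p$ is annihilated by $p$ and $(\F_p \otimes_\Z M_p^\#/M_p,\ \overline Q)$ is an honest anisotropic quadratic space over $\F_p$.

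Finally I would upgrade this $G$-maximality to ordinary maximality of $M_p$ as a lattice in $V_p$, which is what the theorem asserts. The point is that over $\Z_p$ a lattice $N$ is maximal (among all even lattices, forgetting the group) if and only if its discriminant form $(N^\#/N,\overline Q)$ contains no nonzero isotropic vector with the appropriate integrality — equivalently $\dim_{\F_p} N^\#/N \le 2$ with the form anisotropic, for $p$ odd, and a similarly bounded situation for $p = 2$; this is the standard local description of maximal lattices via \cite[Section 82H, 92]{OMeara}. Since we have already arranged that $(\F_p\otimes M_p^\#/M_p,\overline Q)$ is anisotropic, an anisotropic quadratic $\F_p$-space has dimension at most $2$ (at most $2$ also in the relevant sense when $p=2$, using that $\overline Q$ is $\Q/\Z$-valued), so no further enlargement is possible even without the group action: any even overlattice $M'\supsetneq M_p$ would give an isotropic vector in $M_p^\#/M_p$, contradiction. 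Hence $M_p$ is a maximal lattice in $V_p$, completing the argument.

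The main obstacle I anticipate is the last step — passing from $G$-maximality to genuine maximality. One must be careful with $p = 2$, where anisotropic quadratic (not just bilinear) $\F_2$-spaces and the $\Q/\Z$-valued refinement behave less uniformly than for odd $p$, and where "maximal even lattice" must be checked against the precise local classification rather than a naive dimension bound. The cleanest route is to note that $M_p$ even and $M_p \subsetneq M'$ even forces a nonzero coset $v + M_p \in M_p^\#/M_p$ with $\overline Q(v+M_p) = Q(v) + \Z = \Z$, i.e. an isotropic vector for $\overline Q$, which the anisotropy from Morales's lemma forbids outright; this avoids dimension counting entirely and handles all $p$ simultaneously.
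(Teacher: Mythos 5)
Your argument is correct and is essentially the paper's own: the theorem is deduced exactly as in the paragraph preceding it, by taking a maximal $\sigma$-invariant even overlattice (enlarged only at $p$), applying \cite[Lemma (1.2)]{Morales} with $G=\langle\sigma\rangle$ a $p$-group so that the discriminant form becomes an anisotropic quadratic $\F_p$-space, and noting that a proper even overlattice of $\Z_p\otimes_{\Z}M$ would produce a nonzero isotropic coset. Your closing observation that the direct isotropic-vector argument avoids dimension counting (and the delicacies at $p=2$) matches the paper's intended reasoning, so there is nothing to correct.
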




\section{Strongly modular lattices of composite level}\label{composite}

To classify extremal strongly $\ell$-modular lattices $L$  of composite level 
$\ell = 6,14,15 $ in medium size dimensions one may use the following 
strategy.
If the genus containing the strongly $\ell $-modular lattices is too big to
be enumerated but the classification of $p$-modular lattices $M$ 
is known for some $p$ dividing $\ell $, then one may 
construct $L$ as a sublattice of $\ell/p$-power index in $M$ as illustrated
in this section.

\begin{theorem} \label{dim12ext15} 
	There are exactly three extremal even strongly $15$-modular
	lattices of dimension $12$.
\end{theorem}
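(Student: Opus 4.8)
The plan is to realise every extremal strongly $15$-modular lattice $L$ of dimension $12$ as a tightly controlled sublattice of a lattice lying in a genus of much smaller mass. Write $15=pq$ with $\{p,q\}=\{3,5\}$. Such an $L$ is even of level $15$ with $\det(L)=15^6$, its completion $\Z_r\otimes_\Z L$ has the Jordan type of an $r$-modular lattice for $r\in\{3,5\}$ (in particular $\Z_3\otimes_\Z L\isom\langle 1\rangle^6\perp\langle 3\rangle^6$, and likewise at $5$, which is forced by $L\isom{}^{(d)}L^{\#,d}$), and $\min(L)=2+2\lfloor 12\cdot 24/(24\cdot 4)\rfloor=8$ since $\sigma_1(15)=24$ and $\sigma_0(15)=4$. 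Now pass to an overlattice $M\supseteq L$ of $q$-power index that is $q$-adically maximal; such $M$ exist (Section~\ref{max}). Since $\det(L)=15^6=(15^3)^2$ is a square, the maximal $\Z_q$-lattices in $\Q_q\otimes_\Q V$ are unimodular, so $\Z_q\otimes_\Z M$ is unimodular. Enlarging $L$ only at $q$ leaves the completions at the other primes unchanged, so $M$ is even with $\det(M)=p^6$ and $\Z_p\otimes_\Z M\isom\Z_p\otimes_\Z L$; hence $M$ lies in the genus $\calG$ of $p$-modular lattices of dimension $12$. Moreover $[M:L]=q^3$, and $L=L(M,U):=\{x\in M\mid x+qM\in U^\perp\}$ for a totally isotropic $\F_q$-subspace $U\leq\overline M:=M/qM$ of dimension $3$. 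Conversely, for any $M\in\calG$ and any such $U$ the lattice $L(M,U)$ is even, of index $q^3$ in $M$, and of $15$-modular Jordan type at every prime. Thus every extremal strongly $15$-modular lattice of dimension $12$ has the form $L(M,U)$.

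The classification is now a finite computation. First I would enumerate $\calG$ with the Kneser neighbour algorithm in {\sc Magma} \cite{Magma}; its mass is much smaller than that of the $15$-modular genus -- which is exactly why the detour is made -- so this terminates quickly, and the equality $\sum_i\abs{\Aut(M_i)}^{-1}=\Mass(\calG)$ certifies that the list $M_1,\dots,M_h$ of representatives is complete. (One also reads off the genus symbol that the $q$-part of the discriminant form is split, which is what guarantees that a $3$-dimensional totally isotropic $U$, and with it $M\in\calG$, exists.) Next, for each $M_i$ I would determine the totally isotropic $3$-dimensional $\F_q$-subspaces $U\leq\overline{M_i}$ for which $L(M_i,U)$ has no vector of norm $<8$; since $L(M_i,U)$ is $15$-modular of dimension $12$ it has minimum $\leq 8$ by Quebbemann's bound, so these are precisely the $U$ yielding an extremal lattice. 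Finally, among the lattices so obtained one keeps those that are strongly $15$-modular -- i.e. isometric to each rescaled partial dual $\ {}^{(d)}L^{\#,d}$, $d\mid 15$ -- sorts them into isometry classes with {\sc Magma}'s isometry test, and computes their automorphism groups; exactly three classes remain.

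The main obstacle is the size of this finite search. Although $\calG$ has small class number, a single $M_i$ can contain a very large number of totally isotropic $3$-dimensional $\F_q$-subspaces, so in practice one must not enumerate them blindly but exploit the large groups $\Aut(M_i)$ and, above all, the strong ``no vector of norm $<8$'' constraint, which cuts the relevant $U$ to a short list; each survivor then needs a shortest-vector computation and a canonical form for the isometry test. A more conceptual point that needs care is the completeness of the correspondence in both directions: that every index-$q^3$ sublattice of $M$ with $15$-modular Jordan type at $q$ comes from a totally isotropic $U$, and that every extremal strongly $15$-modular lattice of dimension $12$ really embeds in a $q$-adically unimodular member of $\calG$ rather than only in some non-maximal overlattice. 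Both follow from the uniqueness of the genus symbol of the $15$-modular genus together with the structure of maximal lattices (Section~\ref{max}); it is at this point, too, that one chooses which of $q\in\{3,5\}$ to use -- the one for which $\calG$ and the ensuing search are of manageable size.
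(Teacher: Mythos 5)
Your overall route is the one the paper actually takes in Section \ref{composite}: realise $L$ inside an overlattice $M$ of index $3^3$ that is $3$-adically unimodular, observe that $M$ lies in the (small) genus of even $5$-elementary lattices of determinant $5^6$ in dimension $12$, and recover $L$ by a finite search over index-$3^3$ sublattices (equivalently, totally isotropic $3$-dimensional subspaces of $M/3M$), checking minima and strong modularity at the end. Two points in your write-up, however, are genuine weaknesses. First, your justification that the maximal overlattice is $q$-adically unimodular, ``since $\det(L)=15^6$ is a square'', is not a valid argument: over $\Z_q$ a quadratic space of square discriminant need not contain a unimodular lattice (the $4$-dimensional anisotropic space over $\Q_q$ has square discriminant, yet its maximal lattices are not unimodular). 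What is needed is the actual local structure of the strongly $15$-modular genus, i.e.\ that the $3$-part of the discriminant form is hyperbolic; your parenthetical appeal to ``reading off the genus symbol'' is where the real argument has to live.

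Second, and more importantly, you never use that for a \emph{strongly} modular $L$ the rescaled partial dual ${}^{(3)}L^{\#,3}$ is again extremal, and this is exactly what makes the computation feasible. The paper uses $L\leq M\leq L^{\#,3}$ with $\min(L^{\#,3})=8/3$ to force $\min(M)\geq 4$, so only the $4$ extremal $5$-modular lattices of dimension $12$ occur as $M$ --- not the whole $5$-elementary genus you propose to enumerate --- and it prunes the stepwise chain of index-$3$ sublattices $X$ by the condition $\min(X^{\#,3})\geq 8/3$, which is legitimate because $L\subseteq X$ (with $[X:L]$ a power of $3$ and $X$ $15$-elementary) implies $X^{\#,3}\subseteq L^{\#,3}$. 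Your only pruning device is ``no vector of norm $<8$'', but this cannot be applied before the last step of a descent (a sublattice has minimum at least that of the overlattice, so intermediate lattices of minimum $<8$ may still contain extremal sublattices), and in a one-shot enumeration the number of totally isotropic $3$-spaces in a nondegenerate $12$-dimensional $\F_3$-space is of the order of $3^{21}$ per genus representative, which even modulo $\Aut(M)$ is not a ``short list''. So as described, your search has no effective mechanism to terminate in reasonable time; the missing ingredient is precisely the extremality of the partial duals, i.e.\ the dual-minimum conditions $\min(M)\geq 4$ and $\min(X^{\#,3})\geq 8/3$ that drive the paper's proof (and its following Remark).
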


\begin{proof}
	Let $L$ be an even 15-elementary lattice of determinant $15^6$
	in dimension 12 such that $\min(L) = 8$ and $\min(L^{\#,3}) = 8/3$. 
	Then there is a 5-elementary lattice $M$ of determinant $5^6$ 
	such that $L\leq M = M^{\#,3} \leq L^{\#,3} $. 
	In particular $\min (M) $ is an even number $\geq 8/3$ so
	$\min(M) \geq 4$. 
	There are 4 such lattices $M$, all are extremal 5-modular lattices. 
	Successively computing 15-elementary sublattices $X$ of index $3^i$, $i=1,\ldots ,3$ of these
	4 lattices $M$, satisfying $\min (X^{\#,3} )\geq 8/3$ we finally obtain three such lattices $L$.  
\end{proof}

\begin{remark}
	More precisely we checked that these three extremal 
	even strongly $15$-modular lattices of dimension $12$
	are the only 
	even $15$-elementary lattices of minimum $8$ and dimension $12$ 
	of determinant $15^6$ such that the $3$-dual 
	has minimum $\geq 8/3$.
\end{remark}

\begin{theorem} 
	There is a unique extremal 
	 strongly $14$-modular lattice in dimension $12$.
\end{theorem}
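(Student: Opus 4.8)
The plan is to follow the strategy of Theorem~\ref{dim12ext15}, this time saturating at the prime $7$ rather than enumerating the (too large) strongly $14$-modular genus directly. For $\ell = 14$ one has $\sigma_1(14) = 24$ and $\sigma_0(14) = 4$, so $J(14) = 4$ and $12$ is a jump dimension; hence an extremal strongly $14$-modular lattice $L$ of dimension $12$ is an even $14$-elementary lattice of determinant $14^6$ with $\min(L) = 8$, and strong modularity gives $\min(L^{\#,2}) = 4$ because ${}^{(2)}L^{\#,2} \cong L$ is again extremal.

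The first step is to replace $L$ by a $7$-modular overlattice. The discriminant group splits as $L^{\#}/L = L^{\#,2}/L \oplus L^{\#,7}/L$ with $L^{\#,2}/L \cong (\Z/2)^6$ and $L^{\#,7}/L \cong (\Z/7)^6$, and strong modularity forces the discriminant quadratic form on the $2$-part $L^{\#,2}/L \cong \F_2^6$ to be split, hence to admit a totally isotropic subspace of dimension $3$. Lifting such a subspace yields an even lattice $M$ with $L \leq M \leq L^{\#,2}$ and $[M:L] = 2^3$; it is $7$-elementary of determinant $7^6$, so it lies in the $7$-modular genus in dimension $12$, and from $M \subseteq L^{\#,2}$ we get $\min(M) \geq \min(L^{\#,2}) = 4$. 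Since there is no extremal $7$-modular lattice of dimension $12$ (its minimum would have to be $6$), in fact $\min(M) = 4$. Enumerating the $7$-modular genus in dimension $12$ by the Kneser-neighbour algorithm and extracting the lattices of minimum $\geq 4$ gives a finite explicit list $M_1, \ldots, M_k$.

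Conversely, every extremal strongly $14$-modular lattice of dimension $12$ occurs, by the construction above, as an even sublattice $L \leq M_j$ of index $2^3$ that is $14$-elementary of determinant $14^6$, has $\min(L) = 8$ and satisfies $\min(L^{\#,2}) \geq 4$. So for each $M_j$ I would run through the sublattices of $2$-power index orbit-wise under $\Aut(M_j)$: at step $i = 1,2,3$ keep those index-$2$ sublattices $X$ of the lattices surviving step $i-1$ that are still $14$-elementary and for which $\min(X^{\#,2})$ is not too small, exactly the pruning employed in the proof of Theorem~\ref{dim12ext15}. After step $3$ one is left with finitely many lattices of determinant $14^6$; discarding those with $\min(L) \neq 8$ or $\min(L^{\#,2}) < 4$ and reducing the result modulo isometry (including isometry tests across the different $M_j$) should leave a single lattice $L$, and verifying ${}^{(d)}L^{\#,d} \cong L$ for $d \mid 14$ then confirms that it is strongly $14$-modular.

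The principal difficulty is exhaustiveness: the argument relies on the assertion that the $2$-adic discriminant form of any extremal strongly $14$-modular lattice of dimension $12$ is split, so that the overlattice $M$ is genuinely $7$-elementary and therefore occurs in the already-enumerated $7$-modular genus; pinning this down calls for some care with quadratic forms over $\F_2$. The remaining obstacle is purely computational: the tree of $2$-power-index sublattices of a $12$-dimensional lattice branches roughly $2^{12}$-fold at each node, so the feasibility of the enumeration depends entirely on the partial-dual-minimum pruning cutting the tree down sharply at every level; as in the level $15$ case one expects the whole computation to take only a few minutes in {\sc Magma}.
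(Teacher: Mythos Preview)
Your approach is essentially that of the paper: pass to a $7$-modular overlattice $M$ with $L \subseteq M = M^{\#,2} \subseteq L^{\#,2}$ and $\min(M) \geq 4$, enumerate the even $7$-modular genus in dimension $12$ (class number $395$, no lattice of minimum $6$), and then descend through $2$-power index sublattices with the partial-dual-minimum pruning $\min(X^{\#,2}) \geq 4$. The paper adds one filter you omit---from $M^{\#} \subseteq L^{\#}$ and $\min\bigl({}^{(14)}L^{\#}\bigr) = 8$ one also gets $\min\bigl({}^{(7)}M^{\#}\bigr) \geq 4$---which cuts the candidates for $M$ down to $49$ before the descent.
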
 

\begin{proof}
	Let $L$ be such an extremal lattice. Then $\min (L) =8$ and hence
	$\min (L^{\#,2}) = 4$ and there is a lattice $M$ in the genus of 
	even 7-modular lattices with $L\subseteq M = M^{\#,2} \subseteq L^{\#,2} $,
	in particular $\min (M) \geq 4$. 
	The  genus of $M$ has class number 395 (\cite{ScharlauHemkemeier}, \cite{Magma})
	and contains no lattice with minimum 6 (which would be extremal) 
	and 49 lattices $M$ of minimum 4 such that also the rescaled dual 
	lattice $\ ^{(7)} M^{\#} $ has minimum $4$.
	Successively computing sublattices $X$ of 2-power index in one 
	of these 49 lattices $M$ such that $\min (X^{\#,2}) \geq 4$ 
	one finally reaches a unique strongly 14-modular extremal $L$.
\end{proof}

\begin{theorem} 
	There are exactly $8$ extremal even strongly $6$-modular lattices 
	in dimension $16$.
\end{theorem}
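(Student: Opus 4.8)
The plan is to mirror the two preceding proofs, this time using the prime $3\mid\ell=6$ and the known classification of the extremal $3$-modular lattices of dimension $16$. We would first record what an extremal even strongly $6$-modular lattice $L$ of dimension $16$ must look like: since $\sigma_1(6)=12$ and $\sigma_0(6)=4$, the bound of \cite{Quebbemann2} forces $\min(L)=2+2\lfloor\tfrac{16\cdot 12}{24\cdot 4}\rfloor=6$ and $\det(L)=6^8$ with $L^\#/L\cong(\Z/2\Z)^8\oplus(\Z/3\Z)^8$, while strong modularity gives $L\cong{}^{(2)}L^{\#,2}$, hence $\min(L^{\#,2})=\tfrac{1}{2}\min(L)=3$. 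Exactly as in the proofs above one then produces a lattice $M$ in the genus of even $3$-modular lattices of dimension $16$ with $L\subseteq M=M^{\#,2}\subseteq L^{\#,2}$, so that $[M:L]=2^4$; from $M\subseteq L^{\#,2}$ we get $\min(M)\geq\min(L^{\#,2})=3$, hence $\min(M)\geq 4$ as $M$ is even, and since the bound of \cite{Quebbemann2} for $3$-modular lattices of dimension $16$ equals $4$ the lattice $M$ is extremal. By the classification recalled in Section \ref{known} there are exactly $6$ choices for $M$.

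Conversely every extremal even strongly $6$-modular lattice $L$ of dimension $16$ arises as an index-$2^4$ sublattice of one of these $6$ lattices $M$, so the remaining work is computational. For each $M$ one enumerates the even sublattices $X\subseteq M$ obtained by successively passing $i=1,\ldots,4$ times to a sublattice of index $2$, keeping at each stage only those $X$ with $\min(X^{\#,2})\geq 3$. Since $2$-duality makes $\min(X^{\#,2})$ non-increasing along such a chain, this pruning loses no candidate and cuts the search tree down to a manageable size. Among the resulting lattices $X$ of index $2^4$ one then keeps those with $\min(X)=6$; these should turn out to be precisely the strongly $6$-modular ones, and one expects exactly $8$ isometry classes. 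This last step is a short {\sc Magma} \cite{Magma} computation.

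The main obstacle will be the size of this sublattice search: without the intermediate condition $\min(X^{\#,2})\geq 3$ one would have to traverse far too many $\F_2$-subspaces of the quotients $X/2X$. A secondary point is to verify a posteriori that the surviving minimum-$6$ lattices really are strongly $6$-modular --- i.e.\ that in addition $L\cong{}^{(3)}L^{\#,3}$ and $L\cong{}^{(6)}L^{\#,6}$ --- and that the $8$ isometry classes are pairwise distinct.
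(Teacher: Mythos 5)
Your proposal follows essentially the same route as the paper: reduce to the six extremal $3$-modular lattices $M$ of dimension $16$ via $L\subseteq M=M^{\#,2}\subseteq L^{\#,2}$ (forcing $\min(M)\geq 4$), then search the $2$-power-index sublattices with the pruning condition $\min(X^{\#,2})\geq 3$ and identify the $8$ surviving strongly $6$-modular classes by computer. Your stated reason for the pruning being lossless ("$\min(X^{\#,2})$ non-increasing along the chain") is loose as a general monotonicity claim, but the needed fact does hold here: for $L\subseteq X\subseteq L^{\#,2}$ with $L$ of square-free level $6$, any $v\in X^{\#,2}$ satisfies $v\in L^{\#}$ and $4v\in L$, hence $2v\in L$ and $X^{\#,2}\subseteq L^{\#,2}$, which is exactly what the paper's (unexplained) pruning uses.
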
 

\begin{proof}
	Let $L$ be such an extremal lattice. Then $\min (L) =6$ and hence
	$\min (L^{\#,2}) = 3$ and there is a lattice $M$ in the genus of 
	even 3-modular lattices with $L\subseteq M = M^{\#,2} \subseteq L^{\#,2} $
	with $\min (M) \geq 4$. These lattices $M$ are hence extremal and 
	there are 6 such lattices. 
	Successively computing sublattices $X$ of 2-power index in one 
	of these 6 lattices $M$ such that $\min (X^{\#,2}) \geq 3$ 
	one finally ends with 8 isometry classes of 
	strongly 6-modular extremal lattices $L$.
\end{proof}

\section{Automorphisms of prime order} \label{autp}

\subsection{The type of an automorphism}\label{type}

Let $L$ be a lattice and $\sigma \in \Aut(L)$ an automorphism of $L$
of prime order $p$.
As $\sigma ^p = 1$ the elements 
$e_1 := \frac{1}{p} (1+\sigma + \ldots + \sigma ^{p-1} )$ and $e_{\zeta }:= 1-e_1$ are orthogonal idempotents in the endomorphism ring of 
the $\Q \langle \sigma \rangle $-module $V:=\Q L$ giving the $\sigma $-invariant
decomposition $$V = V e_1 \oplus V e_{\zeta } =  V_1 \oplus V_{\zeta } 
\mbox{ of dimensions } n_1 := \dim (V_1), n_{\zeta }:= \dim (V_{\zeta }) $$
such that the restriction of $\sigma $ to $V_1$ is the identity and 
the restriction of $\sigma $ to $V_{\zeta }$ has minimal polynomial 
$\Phi _p  := X^{p-1} + X^{p-2} + \ldots + X + 1$. In particular 
$V_{\zeta }$ is a vectorspace over  $\Q[\zeta _p]$, so 
$n_{\zeta } = r_{\zeta }(p-1)$ is divisible by $p-1$.

Put $L_1:=L \cap V_1 := \{ \lambda \in L \mid \lambda \sigma = \lambda \}$ 
the fixed lattice of $\sigma $ in $L$  and 
$$L_{\zeta }:= L \cap V_{\zeta } = \{ \lambda \in L \mid (\lambda , \lambda _1) = 0 \mbox{ for all } \lambda _1 \in L_1 \} ,$$ 
its orthogonal lattice.

As $Le_1 = \pi_{V_1}(L)$ and $Le_{\zeta } = \pi _{V_{\zeta }} (L)$ 
Proposition  \ref{dualproj} yields the following corollary.

\begin{corollary}
	$L^{\#} e_1 = (L_1)^{\#} $ and $L^{\#} e_{\zeta } = (L_{\zeta })^{\#}$.
\end{corollary}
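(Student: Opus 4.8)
The plan is to unwind the definitions and apply Proposition~\ref{dualproj} twice, once to each of the two $\sigma$-invariant summands. Recall that $e_1$ and $e_\zeta$ are the orthogonal idempotents giving $V = V_1 \oplus V_\zeta$, and that $V_1 = V e_1$, $V_\zeta = V e_\zeta$ are mutually orthogonal subspaces. The key observation, which is essentially stated in the sentence preceding the corollary, is that the idempotent $e_1$ acts on $V$ as the orthogonal projection $\pi_{V_1}$ (it fixes $V_1$ pointwise and kills $V_\zeta = V_1^\perp$), and likewise $e_\zeta = 1 - e_1$ acts as $\pi_{V_\zeta}$. So $L e_1 = \pi_{V_1}(L)$ and $L e_\zeta = \pi_{V_\zeta}(L)$, and the same holds with $L$ replaced by $L^\#$.

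First I would verify that $e_1$ is self-adjoint with respect to the bilinear form, i.e. $(xe_1, y) = (x, ye_1)$ for all $x,y \in V$; since $\sigma$ is an isometry, each $\sigma^i$ is self-adjoint-to-its-inverse, and $e_1 = \frac{1}{p}\sum_{i=0}^{p-1}\sigma^i$ is a symmetric average, hence self-adjoint. Combined with $e_1^2 = e_1$ and $\ker e_1 = V_\zeta = V_1^\perp$, this identifies $e_1$ with $\pi_{V_1}$ in $\End(V)$, and dually $e_\zeta$ with $\pi_{V_\zeta}$. Then I apply Proposition~\ref{dualproj} with $U = V_1$: it gives $(L \cap V_1)^\# = \pi_{V_1}(L^\#) = L^\# e_1$. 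By definition $L_1 = L \cap V_1$, so this reads $(L_1)^\# = L^\# e_1$, which is the first claim. Applying Proposition~\ref{dualproj} again with $U = V_\zeta$ gives $(L \cap V_\zeta)^\# = \pi_{V_\zeta}(L^\#) = L^\# e_\zeta$, i.e. $(L_\zeta)^\# = L^\# e_\zeta$, the second claim. One should note that the duals $(L_1)^\#$ and $(L_\zeta)^\#$ here are taken inside the subspaces $V_1$ and $V_\zeta$ respectively, which is exactly the convention of Proposition~\ref{dualproj}.

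There is essentially no obstacle: the only thing requiring a line of justification is the identification of the algebra idempotents $e_1, e_\zeta$ with the geometric orthogonal projections $\pi_{V_1}, \pi_{V_\zeta}$, and this follows immediately from self-adjointness of $e_1$ together with the orthogonal decomposition $V = V_1 \oplus V_\zeta$ with $V_\zeta = V_1^\perp$ (the orthogonality of $V_1$ and $V_\zeta$ being a standard consequence of $\sigma$ being an isometry: if $x \in V_1, y \in V_\zeta$ then $(x,y) = (x\sigma, y\sigma)$ and averaging over the group forces $(x,y) = (x, y e_1) = (x, 0) = 0$). Once that is in place, the corollary is a direct transcription of Proposition~\ref{dualproj}. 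I would keep the proof to three or four lines.
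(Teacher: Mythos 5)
Your proof is correct and takes essentially the same route as the paper: the paper justifies the corollary in the sentence immediately preceding it, observing that $Le_1=\pi_{V_1}(L)$ and $Le_{\zeta}=\pi_{V_{\zeta}}(L)$ and then invoking Proposition~\ref{dualproj} for $U=V_1$ and $U=V_{\zeta}$. Your explicit check that $e_1$ is self-adjoint (so that the algebra idempotents really are the orthogonal projections) simply spells out what the paper leaves implicit.
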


The basis of the definition of the type is given in the following lemma 
(see for instance \cite{Nebeautext}, \cite{Juergens}). 

\begin{lemma} \label{amal}
With the notation above we have 
$$ pLe_1 \perp L e_{\zeta }( 1-\sigma )  \subseteq L_1\perp L_{\zeta } \subseteq L \subseteq Le_1 \perp L e_{\zeta } $$ 
and $L$ is a full subdirect product of $Le_1$ and $L e_{\zeta }$ 
in particular $Le_1/L_1 \cong L e_{\zeta }/L_{\zeta } $ as $\F_p \langle \sigma \rangle $-modules. 
Moreover the integer 
$$s:= \dim _{\F _p} (Le_1/L_1) = \dim _{\F _p} (Le_\zeta /L_\zeta ) $$
satisfies $s\leq \min (n_1,r_{\zeta }) $ 
\end{lemma}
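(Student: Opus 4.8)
The plan is to establish the displayed chain of inclusions by elementary element–chasing, then to read off the subdirect–product statement and the module isomorphism from Goursat's lemma, and finally to bound $s$ by examining the two quotients $Le_1/L_1$ and $Le_\zeta/L_\zeta$ separately. Before the chain, I would record that $V_1\perp V_\zeta$: since $\sigma\in\Aut(L)$ preserves the form we have $(\lambda\sigma,\mu)=(\lambda,\mu\sigma^{-1})$, so the idempotent $e_1=\frac1p(1+\sigma+\dots+\sigma^{p-1})$ is self-adjoint, and hence $(x,y)=(xe_1,y)=(x,ye_1)=0$ for $x\in V_1=Ve_1$ and $y\in V_\zeta=\ker e_1$.

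For the chain itself: writing $\lambda=\lambda e_1+\lambda e_\zeta$ and using $Le_1\perp Le_\zeta$ gives $L\subseteq Le_1\perp Le_\zeta$. The lattices $L_1=L\cap V_1$ and $L_\zeta=L\cap V_\zeta$ are orthogonal sublattices of the group $L$, so $L_1\perp L_\zeta\subseteq L$; moreover a $\sigma$-fixed vector $\mu$ satisfies $\mu e_1=\mu$, so $L_1\subseteq Le_1$ and $L_\zeta\subseteq Le_\zeta$. For the leftmost inclusion: for $\lambda\in L$ the element $p\lambda e_1=\lambda(1+\sigma+\dots+\sigma^{p-1})$ lies in $L$ and is $\sigma$-fixed, so $pLe_1\subseteq L\cap V_1=L_1$; and $\lambda e_\zeta(1-\sigma)=\lambda(1-\sigma)-\lambda e_1(1-\sigma)=\lambda-\lambda\sigma\in L$ lies in $V_\zeta$ (using $e_1\sigma=e_1$), so $Le_\zeta(1-\sigma)\subseteq L\cap V_\zeta=L_\zeta$; these two lattices are orthogonal, which finishes the chain. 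Along the way one notes $pLe_1\subseteq L_1\subseteq Le_1$ are all of full rank $n_1$ in $V_1$ (and analogously in $V_\zeta$), so every successive quotient is a finite $p$-group.

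The subdirect–product claim is then immediate: $L\subseteq Le_1\oplus Le_\zeta$, and the coordinate projections $\lambda\mapsto\lambda e_1$ and $\lambda\mapsto\lambda e_\zeta$ have images exactly $Le_1$ and $Le_\zeta$. Goursat's lemma applied to this subdirect product, whose relevant subgroups are $Le_1\cap L=L_1$ and $Le_\zeta\cap L=L_\zeta$, yields an isomorphism $Le_1/L_1\cong Le_\zeta/L_\zeta$ given by $\lambda e_1+L_1\mapsto\lambda e_\zeta+L_\zeta$. This map is $\langle\sigma\rangle$-equivariant because $L$ is $\sigma$-stable and $\sigma$ commutes with $e_1$ and $e_\zeta$; and since $pLe_1\subseteq L_1$ the module $Le_1/L_1$ is annihilated by $p$, so it is an $\F_p\langle\sigma\rangle$-module and $s$ is well defined.

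It remains to bound $s$. From $pLe_1\subseteq L_1\subseteq Le_1$ with $Le_1\cong\Z^{n_1}$ one gets a surjection $(\Z/p)^{n_1}\cong Le_1/pLe_1\twoheadrightarrow Le_1/L_1$, whence $s\leq n_1$. For the other factor, $Le_\zeta$ is a finitely generated torsion-free module over $\Z[\sigma]/(\Phi_p)\cong\Z[\zeta_p]$ of rank $r_\zeta$, on which $1-\sigma$ acts as $1-\zeta_p$, a generator of the unique prime $\mathfrak{p}$ of $\Z[\zeta_p]$ above $p$, with residue field $\F_p$. Since $\Z[\zeta_p]$ is a Dedekind domain we have $\dim_{\F_p}\bigl(Le_\zeta/(1-\sigma)Le_\zeta\bigr)=r_\zeta$, and as $(1-\sigma)Le_\zeta\subseteq L_\zeta\subseteq Le_\zeta$ the quotient $Le_\zeta/L_\zeta$ is a further quotient, giving $s\leq r_\zeta$; together $s\leq\min(n_1,r_\zeta)$. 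The only nonroutine point is this last equality $\dim_{\F_p}(Le_\zeta/(1-\sigma)Le_\zeta)=r_\zeta$: I would obtain it by localizing $Le_\zeta$ at $\mathfrak{p}$, where it becomes free of rank $r_\zeta$ over a discrete valuation ring, and then applying Nakayama's lemma; all the remaining steps are bookkeeping.
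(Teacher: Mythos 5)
Your proof is correct, but it takes a genuinely different route from the paper. The paper works $p$-adically: it invokes the classification of indecomposable $\Z_p[\sigma]\cong\Z_p C_p$-lattices (the only indecomposables being $\Z_p$, $\Z_p[\zeta_p]$ and the regular module $R$) together with Krull--Schmidt, writes $\Z_p\otimes_{\Z}L\cong R^{s'}\oplus\Z_p[\zeta_p]^r\oplus\Z_p^t$, and reads off the quotients $Le_1/L_1\cong\F_p^{s'}\cong Le_\zeta/L_\zeta$ and the bounds $n_1=s+t\geq s$, $r_\zeta=r+s\geq s$ directly from that decomposition. You instead prove the chain of inclusions by element-chasing (self-adjointness of $e_1$, $p\lambda e_1=\lambda(1+\sigma+\dots+\sigma^{p-1})$, $\lambda e_\zeta(1-\sigma)=\lambda-\lambda\sigma$), get the isomorphism $Le_1/L_1\cong Le_\zeta/L_\zeta$ from Goursat's lemma applied to the subdirect product $L\leq Le_1\oplus Le_\zeta$, and obtain $s\leq n_1$ from $Le_1/pLe_1\cong(\Z/p)^{n_1}$ and $s\leq r_\zeta$ from the fact that $Le_\zeta$ is a torsion-free $\Z[\zeta_p]$-module of rank $r_\zeta$ with $\dim_{\F_p}\bigl(Le_\zeta/(1-\sigma)Le_\zeta\bigr)=r_\zeta$ (localization at the prime above $p$ plus Nakayama); all of these steps check out, including the well-definedness and $\sigma$-equivariance of the Goursat isomorphism. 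The trade-off: your argument is self-contained, elementary and stays over $\Z$ (Goursat plus standard Dedekind-domain facts), whereas the paper's argument is shorter once one accepts the nontrivial representation-theoretic input and, more importantly, it exhibits $s$ as the number of free $\Z_pC_p$-summands of $\Z_p\otimes L$, which is exactly the structural refinement the paper exploits immediately afterwards (the corollary with statements (a)--(d) is deduced ``from the proof'' of the lemma); your approach recovers (a)--(c) with a short index count but does not by itself produce that module-theoretic normal form.
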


\begin{proof}
Let $R = \Z_p [\sigma] \cong \Z_p C_p \cong \Z_p[X]/(X^p-1) $  
be the group ring of the cyclic group $C_p$ of prime order $p$
over the ring of $p$-adic integers $\Z_p$.
Then the indecomposable $R$-lattices are the free $R$-module $R$,
the trivial $R$-lattice $\Z_p$ and the lattice $\Z_p[\zeta _p]$
in the irreducible faithful $\Q_p$-representation of $C_p$.

By the theorem of Krull-Schmidt, the $R$-lattice 
$\Z_p\otimes _{\Z} L $ is isomorphic to a unique direct sum of 
indecomposable lattices:
$$\Z_p\otimes _{\Z} L  \cong R^{s'} \oplus \Z_p[\zeta _p] ^r \oplus \Z_p^t .$$
The lattice $\Z_p \otimes _{\Z } L_1$ is then a sublattice of 
dimension ${s'}+t$ of $L$ and $$\Z_p \otimes _{\Z } Le_1/\Z_p \otimes _{\Z } L_1
\cong \Z_p/p\Z_p^{s'} \cong Le_1/L_1 \cong \F_p^s ,$$ so $s=s'$ and 
$n_1=s+t \geq s$. Similarly $r_{\zeta } = r+s \geq s$. 
\end{proof}

\begin{definition}
The tuple $p-(r_{\zeta },n_1)-s $ is called the type associated
to $(L,\sigma )$.
\end{definition}

From the proof of Lemma \ref{amal} we obtain the following corollary
(see also \cite[Section 4.1 and 4.2]{Juergens}, in particular 
\cite[Proposition 4.1.8]{Juergens} for part (d)).

\begin{corollary}
	Let $p-(r_{\zeta },n_1)-s $ be the type of $(L,\sigma )$.
	\begin{itemize}
		\item[(a)]  $pLe_1 \subseteq L_1$ and $ Le_{\zeta}(1-\sigma ) \subseteq L_{\zeta }$.
		\item[(b)]
	If $s=n_1$ then $pLe_1= L_1$. 
\item[(c)]
	If $s=r_{\zeta}$ then $Le_{\zeta}(1-\sigma) = L_{\zeta}$. 
\item[(d)] 
	If $p$ does not divide $\det(L)$ then $s\equiv r_{\zeta } \pmod{2}$.
	\end{itemize}
\end{corollary}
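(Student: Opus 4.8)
The parts (a), (b), (c) are pure bookkeeping with the Krull--Schmidt decomposition
$$\Z_p\otimes_\Z L \;\cong\; R^{s}\oplus \Z_p[\zeta_p]^{r}\oplus \Z_p^{t}$$
exhibited in the proof of Lemma \ref{amal} (so $r_\zeta = r+s$ and $n_1=s+t$). Part (a) is immediate: the chain of Lemma \ref{amal} gives $pLe_1\perp Le_\zeta(1-\sigma)\subseteq L_1\perp L_\zeta$, and intersecting with $V_1$, resp.\ $V_\zeta$, yields $pLe_1\subseteq L_1$ and $Le_\zeta(1-\sigma)\subseteq L_\zeta$. For (b) and (c) I would compute the relevant indices one indecomposable summand at a time, using $e_\zeta(1-\sigma)=1-\sigma$: on an $R$-summand one has $Re_1\cong\Z_p$ with $R\cap V_1=\Z_p(1+\sigma+\dots+\sigma^{p-1})=pRe_1$, and $Re_\zeta\cong\Z_p[\zeta_p]$ with $R\cap V_\zeta=(1-\sigma)R=Re_\zeta(1-\sigma)$; on a $\Z_p[\zeta_p]$-summand $e_1$ acts as $0$ while $(1-\sigma)$ acts as multiplication by $1-\zeta_p$, with cokernel $\Z_p[\zeta_p]/(1-\zeta_p)\cong\F_p$; on a $\Z_p$-summand $e_\zeta$ acts as $0$ while $pLe_1$ has index $p$ in $Le_1=L_1$. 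Summing the contributions gives $[L_1:pLe_1]=p^{\,t}=p^{\,n_1-s}$ and $[L_\zeta:Le_\zeta(1-\sigma)]=p^{\,r}=p^{\,r_\zeta-s}$, which vanish exactly when $s=n_1$, resp.\ $s=r_\zeta$ (so (b) and (c) hold in fact as equivalences).

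Part (d) is the substantive point and the step I expect to need real work. Assume $p\nmid\det(L)$, so $\Z_p\otimes_\Z L$ is unimodular, and work after localising at $p$. Since $L=L^{\#}$ over $\Z_p$, the corollary preceding Lemma \ref{amal} gives $Le_\zeta=L^{\#}e_\zeta=(L_\zeta)^{\#}$, hence $\det(L_\zeta)=|(L_\zeta)^{\#}/L_\zeta|=|Le_\zeta/L_\zeta|=p^{\,s}$. On the other hand $L_\zeta$ is a lattice in $V_\zeta$, a module over the complete discrete valuation ring $\Z_p[\zeta_p]$ (since $\Phi_p(\sigma)$ annihilates $V_\zeta$), so $L_\zeta$ is $\Z_p[\zeta_p]$-free of rank $r_\zeta$. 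The $\sigma$-invariant $\Z_p$-bilinear form on $V_\zeta$ is of the shape $(x,y)=\Tr_{\Q_p(\zeta_p)/\Q_p}\bigl(h(x,y)\bigr)$ for the unique Hermitian form $h$ (with respect to $\zeta_p\mapsto\zeta_p^{-1}$) determined by $\Tr(a\,h(x,y))=(ax,y)$ for $a\in\Q_p(\zeta_p)$. Comparing $\Z_p$-determinants of a free $\Z_p[\zeta_p]$-lattice then gives, up to the square of a $p$-adic unit,
$$\det(L_\zeta)\;=\;N_{\Q_p(\zeta_p)/\Q_p}(\det h)\cdot \mathrm{disc}\bigl(\Tr_{\Z_p[\zeta_p]/\Z_p}\bigr)^{r_\zeta}.$$
Here $\det h$ lies in the maximal real subfield $\Q_p(\zeta_p)^{+}$ (a Hermitian Gram matrix has self-conjugate determinant), so $N_{\Q_p(\zeta_p)/\Q_p}(\det h)=N_{\Q_p(\zeta_p)^{+}/\Q_p}(\det h)^{2}$ has even $p$-valuation, whereas $\mathrm{disc}(\Tr_{\Z_p[\zeta_p]/\Z_p})=N_{\Q_p(\zeta_p)/\Q_p}(\mathfrak d)$ with $\mathfrak d=(\Phi_p'(\zeta_p))$ the different, of $p$-valuation $p-2$. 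Taking valuations modulo $2$ yields $s\equiv (p-2)r_\zeta\equiv r_\zeta\pmod 2$ for odd $p$. For $p=2$ the trace form is trivial and one instead feeds the same bookkeeping the Milgram/Gauss-sum relation between the signature of an even lattice and the Brown invariant of its discriminant form, obtaining $s\equiv n_1\equiv n_\zeta=r_\zeta\pmod 2$; equivalently one invokes \cite[Proposition 4.1.8]{Juergens}.

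The main obstacle is the passage from the $\sigma$-invariant $\Z_p$-bilinear form to the Hermitian form $h$ over $\Z_p[\zeta_p]$ and the precise bookkeeping of how $\mathrm{disc}(\Tr_{\Z_p[\zeta_p]/\Z_p})$ enters the $\Z_p$-determinant of a free $\Z_p[\zeta_p]$-lattice carrying an $\mathfrak d^{-1}$-valued Hermitian form; once these are set up correctly the parity statement in (d) drops out. Everything in (a)--(c) is elementary module theory over $\Z_p[\sigma]$.
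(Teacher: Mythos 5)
Your (a)--(c) coincide with the paper's route: the paper reads the corollary off from the proof of Lemma \ref{amal}, and your summand-by-summand count in $\Z_p\otimes_\Z L\cong R^{s}\oplus\Z_p[\zeta_p]^{r}\oplus\Z_p^{t}$, giving $[L_1:pLe_1]=p^{\,n_1-s}$ and $[L_\zeta:Le_\zeta(1-\sigma)]=p^{\,r_\zeta-s}$, is exactly that bookkeeping (and correctly shows that (b) and (c) are in fact equivalences). Where you genuinely diverge is (d): the paper offers no argument and simply cites \cite[Proposition 4.1.8]{Juergens}, while you sketch a proof. Your sketch is sound: since $p\nmid\det(L)$ one has $\Z_p\otimes L^{\#}=\Z_p\otimes L$, so the corollary $L^{\#}e_\zeta=(L_\zeta)^{\#}$ gives $v_p(\det L_\zeta)=s$; writing $(x,y)=\Tr_{K/\Q_p}(h(x,y))$ with $K=\Q_p(\zeta_p)$ and $h$ hermitian, and using the determinant formula for the transfer of a free $\Z_p[\zeta_p]$-lattice, yields $s\equiv r_\zeta(p-2)+v_p\bigl(N_{K/\Q_p}(\det h)\bigr)\equiv r_\zeta\pmod 2$ for odd $p$, because $\det h$ is conjugation-invariant, so $N_{K/\Q_p}(\det h)=N_{K^{+}/\Q_p}(\det h)^{2}$ has even valuation. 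Two points you should tighten rather than flag as obstacles: the transfer-determinant formula is only needed up to units (you take valuations mod $2$), and it reduces to the rank-one computation $\det\bigl(\Tr(\bar e_j\,a\,e_k)\bigr)_{j,k}=\pm N_{K/\Q_p}(a)\,\mathrm{disc}(\Z_p[\zeta_p]/\Z_p)$ plus block additivity, so it is a short lemma, not a genuine gap; and for $p=2$ the Milgram/Brown route needs the extra remark that the odd-primary part of the discriminant form has even signature mod $8$ (since $\det L$ need not be $\pm1$) --- it is cheaper to argue $2$-adically in the spirit of Lemma \ref{fixlat2}: in a Jordan splitting of $\Z_2\otimes L_{\zeta}$ the unimodular block is even, hence of even rank, and the scale-$2$ block has rank $s$, so $s\equiv n_{-1}=r_\zeta\pmod 2$ with no global invariant. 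In short: same argument as the paper for (a)--(c); for (d) you replace the paper's citation by a workable transfer/discriminant argument, which buys self-containedness at the cost of a page of standard hermitian bookkeeping.
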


\begin{remark}
The type determines the isomorphism class of $L$ as a $\Z_p[\sigma ] $-module. 
In particular the type of $L$ is the same as the one of any of its 
$\sigma $-invariant sublattices of index prime to $p$;
and if $L$ is an even lattice such that $p$ does not divide 
its determinant, then the types of $(L,\sigma )$  and $(L^{\#},\sigma )$
are the same.
\end{remark}

The last sentence of the previous remark 
also holds if $p$ divides the determinant of $L$: 
\begin{lemma} \label{dual} 
	The type of $(L,\sigma )$ equals the type of  $(L^{\#},\sigma )$ and
	also to the type of all its partial dual lattices.
\end{lemma}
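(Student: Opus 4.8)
The plan is to reduce everything to the local picture at the prime $p$, where the type is by definition a complete invariant of the $\Z_p[\sigma]$-module structure. First I would recall from the Remark preceding the lemma that if $p \nmid [L:M]$ for a $\sigma$-invariant sublattice $M$, then $(L,\sigma)$ and $(M,\sigma)$ have the same type, because $\Z_p \otimes_\Z L = \Z_p \otimes_\Z M$. So the content is entirely about what happens at $p$ itself, and the strategy is to show that passing to $L^{\#}$ (and to partial duals $L^{\#,d}$) does not change the $\Z_p[\sigma]$-module isomorphism class, hence does not change the type.

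The key computation is to track the indecomposable summands under dualization. Using the Krull--Schmidt decomposition from the proof of Lemma \ref{amal}, write $\Z_p \otimes_\Z L \cong R^{s} \oplus \Z_p[\zeta_p]^{r} \oplus \Z_p^{t}$, where $R = \Z_p[C_p]$. Taking $\Z_p$-duals with respect to the bilinear form is compatible with the $\sigma$-action (since $\sigma$ is an isometry, its action on the dual is via $\sigma^{-1}$, which generates the same group and has the same indecomposable lattices), so $\Z_p \otimes_\Z L^{\#}$ is again an $R$-lattice, and I must check that each indecomposable summand is self-dual: $R^{\#} \cong R$ (as $R$ is self-dual as a $\Z_p$-order, being a group ring over $\Z_p$), $(\Z_p[\zeta_p])^{\#} \cong \Z_p[\zeta_p]$ up to a $\sigma$-twist that preserves the isomorphism type, and $\Z_p^{\#} \cong \Z_p$. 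Therefore $\Z_p \otimes_\Z L^{\#} \cong R^{s} \oplus \Z_p[\zeta_p]^{r} \oplus \Z_p^{t}$ as well, which is exactly the statement that $(L^{\#},\sigma)$ has the same type $p-(r_\zeta, n_1)-s$ as $(L,\sigma)$.

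For the partial dual lattices $L^{\#,d} = L^{\#} \cap \tfrac1d L$ with $d \mid \ell$: if $p \nmid d$, then $L^{\#,d}$ and $L^{\#}$ agree after tensoring with $\Z_p$ (the operation $\cap \tfrac1d L$ only affects the $q$-adic completions for $q \mid d$), so by the first step the type is unchanged; if $p \mid d$, then since $\ell$ is square-free we have $\ell/p$ coprime to $p$, and one reduces $\cap \tfrac1d L$ at the prime $p$ to $\cap \tfrac1p L$, which again produces a $\sigma$-invariant $\Z_p$-lattice between $pL^{\#}$ and $L^{\#}$; I would argue this $\Z_p[\sigma]$-lattice is still isomorphic to $R^{s} \oplus \Z_p[\zeta_p]^{r} \oplus \Z_p^{t}$ by noting that all the relevant partial duals sit in the chain governed by the same amalgam data of Lemma \ref{amal}, or simply invoke that $^{(d)}(L^{\#,d})$ is an even lattice whose $p$-adic completion is determined by the same local genus-invariants, hence the same type.

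The main obstacle I anticipate is the bookkeeping at the prime $p$ when $p \mid d$: one must be careful that $L^{\#,d}$, though not equal to $L^{\#}$ locally at $p$, still has $\Z_p \otimes_\Z L^{\#,d}$ of the same Krull--Schmidt shape. The clean way around this is to observe that $L^{\#,d}$ is obtained from $L^{\#}$ by applying Proposition \ref{dualproj}-type operations only at primes dividing $d$, and at $p$ the two lattices $L$ and $L^{\#,d}$ differ by the same index pattern that relates $L$ to $L^{\#}$ (namely multiplication of the discriminant form at $p$ by a unit), so the indecomposable decomposition is preserved; alternatively, since the type only depends on the triple $(r_\zeta, n_1, s)$ and $n_1, r_\zeta$ are determined by the rational representation (hence unchanged by any rescaling or partial dualization) while $s = \dim_{\F_p}(Le_1/L_1)$ is the $p$-part of the "glue", one checks directly that this glue number is the same for $L$, $L^{\#}$, and $L^{\#,d}$ using $L_1^{\#,d} = (L^{\#,d})_1$ and the analogous identity for $L_\zeta$.
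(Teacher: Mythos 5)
Your first half is exactly the paper's argument: identify $\Z_p\otimes_{\Z}L^{\#}$ with $\Hom_{\Z_p}(\Z_p\otimes_{\Z}L,\Z_p)$ and use that the three indecomposable $\Z_p[\sigma]$-lattices $R$, $\Z_p[\zeta_p]$, $\Z_p$ are self-dual, so the Krull--Schmidt multiplicities, hence the type, are unchanged. That part is correct and needs no further comment.

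The gap is in the treatment of the partial duals, where your local bookkeeping is inverted and the only nontrivial case is left to placeholders. Since $L\subseteq L^{\#}$ and $L$ has square-free level $\ell$, one has $\Z_q\otimes L^{\#}\subseteq \frac1q\,\Z_q\otimes L$ for every prime $q\mid\ell$; consequently $\Z_p\otimes_{\Z}L^{\#,d}$ equals $\Z_p\otimes_{\Z}L^{\#}$ precisely when $p\mid d$, and equals $\Z_p\otimes_{\Z}L$ (not $\Z_p\otimes L^{\#}$, as you claim) when $p\nmid d$ --- the operation $\cap\,\frac1d L$ leaves the completions at primes dividing $d$ untouched and cuts $L^{\#}$ down to $L$ at the other primes, the opposite of what you assert. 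In the branch $p\nmid d$ your conclusion survives anyway, since both candidate modules have the same type. But in the branch $p\mid d$, which is the only case requiring anything beyond the first half, you miss the containment $\Z_p\otimes L^{\#}\subseteq\frac1p\,\Z_p\otimes L$ that makes $\Z_p\otimes L^{\#,d}=\Z_p\otimes L^{\#}$ on the nose (this is the one-line observation the paper uses), and your two substitutes do not close it: the appeal to ``the same amalgam data of Lemma \ref{amal}'' is not an argument, and the appeal to the local genus-invariants of $^{(d)}(L^{\#,d})$ is unsound, because the genus of a quadratic lattice forgets $\sigma$ and therefore cannot determine the $\Z_p[\sigma]$-module structure, in particular not the glue parameter $s$. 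Your final alternative (comparing $s$ directly via $(L^{\#,d})_1$ and $(L^{\#,d})_\zeta$) is likewise only asserted; it can be carried out for $L^{\#}$ using Proposition \ref{dualproj}, but for $L^{\#,d}$ with $p\mid d$ you would again want the local identification above, at which point the lemma is already proved.
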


\begin{proof}
	The $\sigma $-invariant quadratic form identifies the 
	$L^{\#} $  with $\Hom _{\Z } (L,\Z )$ 
	and also $\Z_p \otimes L^{\#} $ with the dual module 
	$\Hom _{\Z_p} (\Z_p\otimes L , \Z_p) $. 
	As the indecomposable direct summands of 
	$\Z_p\otimes _{\Z} L$ are self-dual, hence  isomorphic 
	to their dual 
	as a $\Z_p[\sigma ]$-module, one sees that $\Z_p\otimes _{\Z} L\cong \Z_p\otimes _{\Z} L^{\#} $ as a $\Z_p[\sigma ]$-module. 
For the partial dual lattice $L^{\#,d}$ it is enough to note that 
$$\Z_p \otimes _{\Z} L^{\#,d} = \left\{ \begin{array}{cc} \Z_p \otimes _{\Z } L^{\#}  & \mbox{ if } p \mid d  \\
\Z_p \otimes _{\Z } L & \mbox{ otherwise.} \end{array} \right. $$
\end{proof}

\subsection{The det-type of an automorphism.} \label{dettype}
Whereas the type of an automorphism determines the 
$\Z_p[\sigma ]$-module structure of $\Z_p\otimes _{\Z} L$, 
the det-type depends on the $\F_q[\sigma]$-module structure 
of the Sylow-$q$-subgroup $L^{\#,q}/L$ 
of the discriminant group $L^{\#}/L$. 
Here we assume that $L$ is an even lattice 
of square-free level $\ell $. 
For all prime divisors $q$ of $\ell $ with $p\neq q$ the Sylow-$q$-subgroup 
of the discriminant groups of $L$ and its sublattice of $p$-power index 
$L_1 \perp L_{\zeta }$ coincide. 
So 
$$L^{\#,q} / L \cong (L^{\#,q})_1/L_1 \perp (L^{\#,q})_{\zeta } / L_{\zeta} \cong (L_1)^{\#,q} / L_1 \perp (L_{\zeta })^{\#,q}/L_{\zeta } $$
and we put  $d_1(q):= \dim _{\F_q} ((L_1)^{\#,q} / L_1  ) $ and 
$d_\zeta (q):= \dim _{\F_q} ((L_\zeta )^{\#,q} / L_\zeta   ) $. 

The irreducible factors of $\Phi_p \in \F_q[X]$ are of degree 
$o_p(q)$, the order of $q$ in $\F_p^{*}$. 
As $(L_\zeta )^{\#,q} / L_\zeta$ is a self-dual $\F_q[\sigma ]$-module 
we find that 
\begin{remark}\label{divdeg}
	The least common multiple of $2$ and $o_p(q)$ divides 
	$d_{\zeta }(q)$. 
\end{remark}

If $p$ divides $\ell $ we also put 
$d_1(p):= \dim _{\F_p} ((L^{\#,p})_1 / L_1  ) $ and
$d_\zeta (p):= \dim _{\F_p} ((L^{\#,p})_{\zeta } / L_\zeta   ) $. 
Then we have for any prime divisor $q$ of $\ell $ that 
$q^{d_1(q)+d_{\zeta}(q)} = |L^{\#,q}/L|$  is the $q$-part
of the determinant of $L$ (for $p=q$ this is 
investigated in more detail in \cite{PreprintNebe}).

\begin{definition}
Let $\sigma $ be an automorphism of order $p$ of an even lattice
$L$ of square-free level $\ell$.
Let $\ell  = q_1\cdots q_r$ be the prime factorization of $\ell $
and put $d_1(q_i):= \dim _{\F_q} ((L^{\#,q_i})_1 / L_1  ) $ and
$d_\zeta (q_i):= \dim _{\F_q} ((L^{\#,q_i})_{\zeta } / L_\zeta   ) $ 
 for $1\leq i \leq r$.
Then the det-type of $(L,\sigma )$ is 
$$[p-(r_{\zeta },n_1)-s,q_1-(d_{\zeta }(q_1),d_{1}(q_1)), \ldots , q_r-(d_{\zeta }(q_r),d_{1}(q_r)) ] .$$
\end{definition}

\begin{theorem} \label{main1}
Let $\sigma $ be an automorphism of order $p$ of an even lattice
$L$ of square-free level $\ell$.
If 
$$[p-(r_{\zeta },n_1)-s,q_1-(d_{\zeta }(q_1),d_{1}(q_1)), \ldots , q_r-(d_{\zeta }(q_r),d_{1 }(q_r)) ] $$
is the det-type of $(L,\sigma )$
then 
the det-type of $(\ ^{(\ell )}L^{\#} ,\sigma )$ is 
$$[p-(r_{\zeta },n_1)-s,q_1-(n_{\zeta }-d_{\zeta }(q_1),n_{1 }-d_{1}(q_1)), \ldots , q_r-(n_{\zeta }-d_{\zeta }(q_r),n_{1 }-d_{1 }(q_r)) ] .$$
\end{theorem}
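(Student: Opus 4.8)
The plan is to analyze how each piece of the det-type behaves under passing from $L$ to the rescaled dual $^{(\ell)}L^{\#}$. First I would observe that the $p$-part of the det-type, namely $p-(r_\zeta,n_1)-s$, is literally the type of $(L,\sigma)$, and by Lemma \ref{dual} this equals the type of $(L^{\#},\sigma)$, hence also of $(\ ^{(\ell)}L^{\#},\sigma)$ since rescaling the bilinear form does not change the $\Z_p[\sigma]$-module structure. So the first coordinate is unchanged, as claimed. It remains to track the invariants $d_1(q_i)$ and $d_\zeta(q_i)$ for each prime divisor $q_i$ of $\ell$.

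Next I would fix a prime $q = q_i$ dividing $\ell$ and unwind the definitions. Write $M := \ ^{(\ell)}L^{\#}$. The key point is that the fixed sublattice and its orthogonal complement behave compatibly with duality: by the Corollary following Proposition \ref{dualproj} we have $L^{\#}e_1 = (L_1)^{\#}$ and $L^{\#}e_\zeta = (L_\zeta)^{\#}$, so $M_1$ is a rescaling of $(L_1)^{\#}$ and $M_\zeta$ a rescaling of $(L_\zeta)^{\#}$. Thus it suffices to understand, for a single positive definite lattice $N$ (playing the role of $L_1$ or $L_\zeta$, which has a $\sigma$-action in the second case), how $\dim_{\F_q}(N^{\#,q}/N)$ compares with $\dim_{\F_q}((\ ^{(\ell)}N^{\#})^{\#,q}/(\ ^{(\ell)}N^{\#}))$. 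The rescaled dual $^{(\ell)}N^{\#}$ has discriminant group of the same total $q$-rank as $N$ (the level being square-free, the $q$-part of $N^{\#}/N$ is elementary abelian of rank equal to $v_q(\det N)$, and $\det(\ ^{(\ell)}N^{\#}) = \ell^{\dim N}/\det N$ has the same $q$-valuation since $q \parallel \ell$). The subtlety is that $N^{\#,q}/N$ is the piece of the discriminant group "on the $N$-side" at $q$, while $(\ ^{(\ell)}N^{\#})^{\#,q}/(\ ^{(\ell)}N^{\#})$ is the complementary piece "on the dual side": concretely $(\ ^{(\ell)}N^{\#})^{\#,q}$ corresponds under the identification of discriminant-group data to $\frac1q N$ intersected with $N^{\#}$, i.e. to the subgroup $qN^{\#} \cap \text{(the $q'$-dual)}$, giving dimension $d_1^{\text{tot}}(q) - d_1(q)$ where $d_1^{\text{tot}}(q) = v_q(\det N)$. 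Applying this with $N = L_1$ gives total rank $n_1$... wait, this needs care: the total $q$-rank of $(L_1)^{\#,q}/L_1$ need not be $n_1$. So I would instead argue directly on $L$: the $q$-part $L^{\#,q}/L$ of the discriminant group, for $q \neq p$, decomposes as $(L_1)^{\#,q}/L_1 \perp (L_\zeta)^{\#,q}/L_\zeta$ of dimensions $d_1(q) + d_\zeta(q) = v_q(\det L)$; and the rescaled-dual operation $L \mapsto \ ^{(\ell)}L^{\#}$ at the prime $q$ replaces the $q$-discriminant data by its orthogonal complement inside the full quadratic module $q^{-1}L/qL^{\#}$ (the "glue group" at $q$), which has total dimension $n$ split as $n_1 + n_\zeta$ compatibly with the $e_1, e_\zeta$ decomposition. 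Hence $d_1(q)$ is replaced by $n_1 - d_1(q)$ and $d_\zeta(q)$ by $n_\zeta - d_\zeta(q)$, which is exactly the asserted formula.

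For the prime $q = p$ (if $p \mid \ell$), I would check that the same complementation holds, now using that the Sylow-$p$ subgroup $(L^{\#,p})_1/L_1$ and $(L^{\#,p})_\zeta/L_\zeta$ interact with the $\sigma$-action but the rescaling $^{(\ell)}(-)^{\#}$ still exchanges a subgroup with its orthogonal complement in the $p$-adic glue module, so $d_1(p) \mapsto n_1 - d_1(p)$, $d_\zeta(p) \mapsto n_\zeta - d_\zeta(p)$; here one uses that $\dim_{\F_p}$ of the full relevant glue module on the $e_1$-part is $n_1$ and on the $e_\zeta$-part is $n_\zeta$, which follows because $L^{\#,p}/L$ sits inside $\frac1p L / L$ of $\F_p$-dimension $n$. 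The main obstacle I anticipate is making precise the claim that "rescaled partial duality replaces a subgroup of the discriminant form by its orthogonal complement" in a way that respects the $e_1 \perp e_\zeta$ splitting and correctly accounts for the $\F_q$-dimensions; once the right framework (viewing $L^{\#,q}/L$ as a totally isotropic-type subgroup of the hyperbolic-like $\F_q$-space $q^{-1}L \cap L^{\#} \big/ \big(qL^{\#} + L\big)$) is set up, each of the three coordinate-transformations becomes a one-line dimension count, and combining them with the already-established invariance of the $p$-part finishes the proof.
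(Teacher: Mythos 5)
Your overall strategy is the same as the paper's: handle the $p$-part via Lemma \ref{dual}, treat the primes $q\mid \ell$ one at a time, and reduce everything to a dimension count giving $d_1(q)\mapsto n_1-d_1(q)$ and $d_\zeta(q)\mapsto n_\zeta-d_\zeta(q)$. But that count --- exactly the step you yourself flag as ``the main obstacle'' --- is not set up correctly. Two concrete problems. First, the opening identification is wrong: the corollary to Proposition \ref{dualproj} says $L^{\#}e_1=(L_1)^{\#}$, i.e.\ it concerns the \emph{projection} $L^{\#}e_1$, whereas for $M=\ ^{(\ell)}L^{\#}$ the fixed lattice $M_1=M\cap V_1$ is a rescaling of $(L^{\#})_1=(Le_1)^{\#}$, not of $(L_1)^{\#}$ (these differ at $p$). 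You abandon that route, but the replacement framework is also misidentified: the ``glue group'' $q^{-1}L/qL^{\#}$ is not an $\F_q$-space of dimension $n$ (locally at $q$ its order is $q^{2n-d}$ with $d=d_1(q)+d_\zeta(q)$ and in general it has exponent $q^2$), and your second candidate $q^{-1}L\cap L^{\#}/(qL^{\#}+L)$ localizes at $q$ to $L^{\#,q}/L$, of dimension $d$, not $n$. So the assertion that rescaled duality replaces the $q$-discriminant data by ``its orthogonal complement'' inside an $n$-dimensional module split as $n_1+n_\zeta$ has, as written, no correct ambient space to live in; moreover for $q=p$ the discriminant group $L^{\#,p}/L$ does not decompose orthogonally into a fixed and a $\zeta$-part at all, so a complement-in-the-discriminant-form argument would be delicate precisely there.

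The gap is easily filled, and the paper fills it with a chain rather than any orthogonal complement: localizing at $q$ one has $\Z_q\otimes M^{\#,q}=\Z_q\otimes \frac{1}{q}L$, hence (forgetting the form) $(M^{\#,q})_1=\frac{1}{q}L_1$ and $M_1=(L^{\#,q})_1$; the chain $L_1\subseteq (L^{\#,q})_1\subseteq \frac{1}{q}L_1$ together with $|\frac{1}{q}L_1/L_1|=q^{n_1}$ gives $\dim_{\F_q}((M^{\#,q})_1/M_1)=n_1-d_1(q)$, and the analogous chain for the $\zeta$-component gives $n_\zeta-d_\zeta(q)$, uniformly in $q$, including $q=p$ and without needing any splitting of $\frac{1}{q}L/L$ or of the discriminant group. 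Your write-up asserts the conclusion of this computation (``each coordinate-transformation becomes a one-line dimension count'') without having produced a valid module in which to perform it; replacing your ``framework to be set up'' by this index computation is what is missing.
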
 

\begin{proof}
	By Lemma \ref{dual} the type of $(L,\sigma)$ and the one of $(L^{\#},\sigma )$
	are the same.  To compute the det-type it is enough to deal with one prime
	at a time (formally we could just take the
	tensor product with $\Z_q$ and deal with $q$-adic lattices).
	So  assume that $\ell = q $ is a prime and 
	put $M:=\ ^{(q)}L^{\#} $. 
	Then $M^{\#} = M^{\#,q} = \ ^{(q)} ( \frac{1}{q} L) $ and  
	forgetting the quadratic form 
$(M^{\#})_1 =  \frac{1}{q} L _1 $ 
and $(M^{\#})_{\zeta } =  \frac{1}{q} L _{\zeta } $. 
We have
$$\frac{1}{q} L_1 \supseteq (L^{\#} )_1 \supseteq L_1 \mbox{ and } 
\frac{1}{q} L_{\zeta } \supseteq (L^{\#} )_{\zeta } \supseteq L_{\zeta }  $$
so 
the theorem follows from the fact that 
$$q^{n_1} = |\frac{1}{q}L_1/L_1| = |\frac{1}{q} L_1 / (L^{\#} )_1 |  
| (L^{\#} )_1 / L_1 | $$ and similarly for $L_{\zeta }$.
\end{proof}

\section{Automorphisms of order 2} \label{ord2}

Of course $-1$ is an automorphism of any lattice, the {\em trivial} automorphism of 
order 2, so it is impossible
to exclude automorphisms of order 2. 
However, non-trivial automorphisms of order 2 usually yield quite restrictive conditions, 
e.g. for the extremal even unimodular lattices of dimension 48 there is only 
one possible type of such automorphisms,  $2-(24,24)-24$, 
whereas for automorphisms of order 5 there are three different types occurring 
in the known lattices. 
One reason is the following lemma, which is a direct generalization 
of \cite[Lemma 4.9]{Nebeautext}. 

\begin{lemma}\label{fixlat2}
	Let $M$ be an even lattice such that $M^{\#} / M$ has exponent $2d$ with 
	$d$ odd.
	Then $M$ contains a sublattice $N$ of $2$-power index such that 
	 $\ ^{(1/2)} N =: U$ is an integral lattice and 
	the exponent of $U^{\#}/U$ is $d$. 
	Moreover if $N$  is a proper sublattice of $M$ then 
	$U$ can be chosen to be an odd lattice.
\end{lemma}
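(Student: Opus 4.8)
The plan is to localize at the prime $2$ and reduce the statement to a small amount of symplectic geometry over $\F_2$. First I would note that none of the conditions on $N$ sees anything beyond the completion $M_2:=\Z_2\otimes_\Z M$: for a $\Z_2$-sublattice $N_2\subseteq M_2$, set $N:=M\cap N_2$, so that $[M:N]=[M_2:N_2]$ is a power of $2$, $\Z_p\otimes N=\Z_p\otimes M$ for every odd $p$, and $\Z_2\otimes N=N_2$. Since $U:={}^{(1/2)}N$ has $U^{\#}=2N^{\#}$, for odd $p$ we get $(\Z_p\otimes U)^{\#}/(\Z_p\otimes U)\cong(\Z_p\otimes M)^{\#}/(\Z_p\otimes M)$, so the odd part of $U^{\#}/U$ is exactly the odd part of $M^{\#}/M$, of exponent $d$. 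Hence $U$ is integral with $U^{\#}/U$ of exponent $d$ precisely when $\Z_2\otimes U$ is unimodular over $\Z_2$, i.e.\ $2N_2^{\#}=N_2$ (duals taken $2$-adically), and $U$ is odd exactly when $Q(x)\in\Z_2^{\times}$ for some $x\in N_2$. So the problem reduces to producing such an $N_2\subseteq M_2$ of $2$-power index.

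Next I would use the Jordan splitting of the even $\Z_2$-lattice $M_2$. Since $M^{\#}/M$ has exponent exactly $2d$ with $d$ odd, $M_2^{\#}/M_2$ is elementary abelian of exponent $2$; only the scales $\Z_2$ and $2\Z_2$ occur, and $M_2=M_2^{(0)}\perp M_2^{(1)}$ with $M_2^{(0)}$ even (an orthogonal summand of an even lattice) and unimodular, and $M_2^{(1)}\neq 0$ a $2$-modular lattice, i.e.\ $2(M_2^{(1)})^{\#}=M_2^{(1)}$ (cf.\ \cite[\S\,93]{OMeara}, \cite[Ch.\,15]{SPLAG}). As $M_2^{(1)}$ already has the required property, I would take $N_2:=K\perp M_2^{(1)}$ and reduce to finding a sublattice $K\leq P:=M_2^{(0)}$ of $2$-power index with $2K^{\#}=K$ (the dual being taken inside $\Q_2\otimes P$).

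Write $r$ for the rank of $P$ and pass to $\overline P:=P/2P$, carrying the reductions $\bar b$ of the bilinear form and $\bar q(\bar x):=Q(x)\bmod 2$ of the quadratic form. By unimodularity $\bar b$ is nondegenerate, and it is alternating because $(x,x)=2Q(x)$; thus $(\overline P,\bar b)$ is symplectic, $r=2t$, and $\bar q$ is a quadratic refinement of $\bar b$. For a subspace $\overline K\leq\overline P$ with preimage $K\leq P$, the chain $2P\subseteq K\subseteq P=P^{\#}$ gives $P\subseteq K^{\#}\subseteq\tfrac12 P$; identifying $\tfrac12 P/P$ with $\overline P$ via $\tfrac12 w\mapsto w+2P$, one checks that $K^{\#}/P$ is the orthogonal complement $\overline K^{\perp}$ of $\overline K$ with respect to $\bar b$. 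Hence $2K^{\#}=K$ if and only if $\overline K$ is a Lagrangian ($\overline K^{\perp}=\overline K$), in which case $[P:K]=2^{t}$; a parallel computation shows ${}^{(1/2)}K$ is even if and only if $\bar q$ vanishes on $\overline K$. Taking $\overline K$ to be any Lagrangian of $(\overline P,\bar b)$ settles the first assertion.

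For the addendum, observe that every admissible $N$ has the same index $[M:N]=2^{t}$ with $r=2t$ the rank of $M_2^{(0)}$, so $N$ being proper is equivalent to $M_2^{(0)}\neq 0$, i.e.\ $r\geq 2$. Then a nondegenerate $\F_2$-quadratic form of rank $\geq 2$ represents $1$, so there is $\bar v\in\overline P$ with $\bar q(\bar v)=1$, and extending $\langle\bar v\rangle$ to a Lagrangian $\overline K$ of $(\overline P,\bar b)$ -- for instance by lifting a Lagrangian of the symplectic space $\bar v^{\perp}/\langle\bar v\rangle$ -- gives $K$ with $\bar q|_{\overline K}\neq 0$, whence ${}^{(1/2)}K$, and so $U$, is odd. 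I expect the main obstacle to be the $2$-adic linear algebra of the third paragraph: establishing $K^{\#}/P=\overline K^{\perp}$ and the dichotomy that ${}^{(1/2)}K$ is odd if and only if $\bar q|_{\overline K}\neq 0$, and verifying that a non-$\bar q$-isotropic Lagrangian exists once $r\geq 2$; the reduction to the $2$-adic problem and the Jordan splitting are standard.
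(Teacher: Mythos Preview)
Your proof is correct. Both your argument and the paper's begin the same way, localizing at $2$ and using the Jordan splitting $M_2=M_2^{(0)}\perp M_2^{(1)}$ to reduce to producing a $2$-modular sublattice $K$ of the even unimodular piece $P=M_2^{(0)}$. From there the two diverge in style. The paper argues by induction on $\dim P$: it picks $v,w\in P$ with $(v,w)\in\Z_2^{\times}$ and $(v,v)\in 2\Z_2^{\times}$, splits off the unimodular plane $\langle v,w\rangle$, replaces it by the $2$-modular plane $\langle v,2w\rangle$, and repeats; the condition $(v,v)\in 2\Z_2^{\times}$ is exactly what forces ${}^{(1/2)}N$ to be odd. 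You instead pass to the symplectic $\F_2$-space $(\overline P,\bar b)$ with its quadratic refinement $\bar q$ and identify the admissible $K$'s with the Lagrangians of $\bar b$, the oddness condition becoming $\bar q|_{\overline K}\not\equiv 0$. The two arguments are really the same computation in different clothing---the paper's choice of $v$ is precisely your $\bar v$ with $\bar q(\bar v)=1$, and iterated plane-peeling is one standard way of building a Lagrangian---but your version is more structural and makes both the uniformity of $[M:N]=2^{t}$ and the even/odd dichotomy for $U$ transparent, while the paper's is a line shorter and avoids introducing the symplectic vocabulary.
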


\begin{proof}
Since this is a statement about 2-adic lattices, we pass to $M_2:=\Z_2\otimes _{\Z}  M$.
This lattice has a Jordan decomposition 
$M_2 = M_{0} \perp M_1 $ (see for instance \cite[Section 91C]{OMeara}, 
\cite[Chapter 15]{SPLAG}), where
$(M_0,Q)$ is a regular quadratic $\Z_2$-lattice 
of dimension, say, $2m$, and $\ ^{(1/2)} M_1 $ is a regular bilinear $\Z_2$-lattice.
If $m=0$, then $M = N \cong \  ^{(2)} U$ for some integral lattice 
$U$ of odd determinant and we are done.
So assume $m\geq 1$. Then
$M_0$ contains  vectors $v, w$ such that $(v,w) \in \Z_2^*$ and 
we may choose $v$ such that $(v,v)\in 2\Z_2^*$. 
So $\langle v, w \rangle $ is a regular sublattice of $M_0$ and hence
$M_0 = \langle v,w \rangle \perp N_0 $ for some lattice $N_0$.
Then
$$ M_0 \perp M_1 \geq _2 N_0 \perp (\langle v, 2w \rangle \perp M_1)  \cong N_0 \perp N_1$$
with $\dim (N_1) = \dim (M_1) +2$ and $\dim (N_0 ) = 2(m-1)$.
Note that $^{(1/2)} N_1 $ is an odd lattice as $1/2(v,v) \in \Z_2^*$.
Since $(N_0,Q)$ is again regular, we may proceed by induction, until 
$N_0 = \{ 0 \}$. 
Then the  sublattice $N$  of the lattice $M$ is constructed as the
unique lattice $N$ with $\Z_p \otimes _{\Z} N = \Z_p \otimes _{\Z} M$ for 
all primes $p>2$ and $\Z_2 \otimes _{\Z} N = N_1$.
\end{proof}

If $\sigma $ is an automorphism of type $2-(n_{-1},n_{1})-s$ 
of a lattice $L$ of odd determinant,
then $M=L_1(\sigma ) \perp L_{\zeta }(\sigma) $ satisfies the assumption of 
Lemma \ref{fixlat2}, in particular there is a sublattice 
$\ ^{(2)} (U_1\perp U_{\zeta })  \leq M $ with 
$\dim(U_1) = n_1$, $\dim(U_{\zeta }) = n_{-1}$ such that 
$\min (U_1) \geq \frac{1}{2} \min (L)$, 
$\min (U_{\zeta }) \geq \frac{1}{2} \min (L)$
and $U_1^{\#}/U_1 \oplus U_{\zeta }^{\#}/U_{\zeta } \cong L^{\#} / L$ as abelian groups.

\section{Extremal 11-modular lattices of dimension 14}\label{ex11mod}

The paper \cite{NebeVenkov} uses Siegel modular forms of degree 2 
to conclude the non-existence of an even 11-modular lattice of dimension 
12 and minimum 8, so the first dimension where an 11-modular lattice 
of minimum 8 might exist is dimension 14. 
Applying the strategy of Section \ref{max} and Section \ref{ord2}  we obtain:

\begin{theorem}\label{11moddim14}
	Let $L$ be an extremal $11$-modular lattice of dimension $14$.
	Then $\Aut(L) = \{ \pm 1 \} $ is trivial.
\end{theorem}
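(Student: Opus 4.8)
The plan is to run the classification strategy of Sections~\ref{max} and~\ref{ord2}: suppose $\sigma\in\Aut(L)$ has prime order $p>2$, derive a contradiction from the dimension and minimum constraints, and then handle $p=2$ separately. First I would record the numerical data: an extremal $11$-modular lattice $L$ of dimension $14$ has $\min(L)=2+2\lfloor 14\cdot 12/(24\cdot 2)\rfloor = 8$, determinant $11^7$, and is strongly $11$-modular, so $^{(11)}L^{\#}$ is again extremal of minimum $8$. Since $14$ is small, the possible prime orders $p$ of a nontrivial $\sigma$ are severely limited: $p-1\le n_\zeta\le 14$ forces $p\in\{2,3,5,7,11,13\}$, and $\dim V_\zeta = r_\zeta(p-1)$ with $n_1 = 14 - r_\zeta(p-1)$. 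For each such $p$ I would enumerate the admissible types $p-(r_\zeta,n_1)-s$, using $s\le\min(n_1,r_\zeta)$ from Lemma~\ref{amal}, the parity/congruence constraints, and the det-type bookkeeping from Remark~\ref{divdeg} and Theorem~\ref{main1} (for $q=11$ the degree $o_{p}(11)$ of the irreducible factors of $\Phi_p$ over $\F_{11}$ constrains $d_\zeta(11)$, and since $L$ is $11$-modular the det-type of $^{(11)}L^{\#}$ must again be realizable by an extremal lattice, i.e. $d_\zeta(11) = n_\zeta - d_\zeta(11)$, forcing $d_\zeta(11)=n_\zeta/2$ and similarly for the $d_1$).

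For each surviving type, I would pass to a $\sigma$-invariant $p$-maximal overlattice $M\supseteq L$ via Theorem~\ref{Thmax} (if $p\ne 11$ this does not change the $11$-adic genus, and if $p=11$ one works with the appropriate maximal lattice at $11$), so that the fixed lattice $M_1$ and its orthogonal complement $M_\zeta$, with their Hermitian/quadratic structures over $\Z[\zeta_p]$, live in genera of small rank ($n_1$ and $r_\zeta$ over $\Z[\zeta_p]$). The minima satisfy $\min(L_1)\ge\min(L)=8$ and $\min(L_\zeta)\ge\min(L)=8$, and the amalgam of Lemma~\ref{amal} is controlled by the $s$-dimensional glue $Le_1/L_1\cong Le_\zeta/L_\zeta$. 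In each case I would check — by a mass-formula / Kneser-neighbour enumeration of the relevant small-rank genera, exactly as in \cite{Nebeautext} — that either the fixed genus contains no lattice of the required minimum, or that no gluing produces a $14$-dimensional $11$-elementary lattice of determinant $11^7$ with $\min\ge 8$ whose $11$-dual also has minimum $8$. This kills all $p>2$.

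It remains to show $\Aut(L)$ has no element of order $2$ other than $-1$; equivalently $\Aut(L)=\{\pm1\}$, since a $2$-group of order $>2$ always contains a non-central involution or an element whose square would give extra structure — more carefully, if $\Aut(L)\ne\{\pm1\}$ then (having excluded odd primes) it is a nontrivial $2$-group, hence contains an involution $\tau\ne\pm1$. For such $\tau$ of type $2-(n_{-1},n_1)-s$, apply the construction following Lemma~\ref{fixlat2} to $M=L_1(\tau)\perp L_\zeta(\tau)$: this yields integral lattices $U_1,U_\zeta$ of dimensions $n_1,n_{-1}$ with $n_1+n_{-1}=14$, with $\min(U_i)\ge\frac12\min(L)=4$, and with $U_1^\#/U_1\oplus U_\zeta^\#/U_\zeta\cong L^\#/L\cong(\Z/11\Z)^7$. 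Since $11$ is inert-free and the rescalings involved are by powers of $2$, the $11$-part of the determinant is split between the two blocks; combined with $\dim U_1+\dim U_\zeta=14$, $\min\ge 4$, and the genus-theoretic constraints on $11$-elementary lattices of these small dimensions, one checks no such pair $(U_1,U_\zeta)$ exists unless $\tau=\pm1$. I expect the \textbf{main obstacle} to be the $p=2$ case: unlike odd primes there is no parity shortcut, the fixed-lattice dimension $n_1$ ranges over many values, and one must carefully track how the full $11^7$ discriminant distributes across $U_1$ and $U_\zeta$ and rule out each split by a genuine (if short) genus enumeration, rather than by a single clean numerical inequality.
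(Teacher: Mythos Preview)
Your overall strategy matches the paper's, but there are two concrete gaps in the $2$-power part.

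First, your reduction to an involution $\tau\neq\pm 1$ is not valid. You write that a $2$-group of order $>2$ ``contains an involution $\tau\neq\pm1$''; this fails for generalized quaternion groups (e.g.\ $Q_8$), which have $-1$ as their \emph{unique} involution. The paper closes this gap by first excluding any $\sigma\in\Aut(L)$ with $\sigma^2=-1$: such a $\sigma$ would act on $L^{\#}/L\cong\F_{11}^{7}$ with minimal polynomial $X^2+1$, which is irreducible in $\F_{11}[X]$ (since $-1$ is a nonsquare mod $11$), contradicting that $7$ is odd. Only after this can one assume the Sylow $2$-subgroup is not cyclic or generalized quaternion and hence contains an involution $\neq\pm1$. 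Your aside ``or an element whose square would give extra structure'' gestures at this, but the ``more carefully'' sentence that follows drops it, so as written the argument is incomplete.

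Second, your expectation that ``no such pair $(U_1,U_\zeta)$ exists'' is too optimistic and also misses half the information. The paper exploits that \emph{both} $(L,\tau)$ and $(^{(11)}L^{\#},\tau)$ are extremal, which by Theorem~\ref{main1} yields a \emph{quadruple} $(U_{-1},U'_{-1},U_1,U'_1)$ of $11$-elementary lattices of minimum $\ge 4$ with paired determinants $(11^{d_1},11^{n_1-d_1},11^{7-d_1},11^{7-n_1+d_1})$. Even with this extra constraint, candidate lattices \emph{do} survive (one $U_{-1}$ in dimension $4$ with determinant $11^2$, three in dimension $6$ with determinant $11^3$), and the proof is only finished after reconstructing $L_1$, $L_{-1}$ and running the gluing of Lemma~\ref{amal} to check that no extremal $L$ arises. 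Your related claim that strong $11$-modularity forces $d_\zeta(11)=n_\zeta/2$ is also unjustified: the isometry $L\cong{}^{(11)}L^{\#}$ need not intertwine $\sigma$ with itself, so the det-types of $(L,\sigma)$ and $(^{(11)}L^{\#},\sigma)$ need not coincide---Theorem~\ref{main1} only tells you what the second det-type \emph{is}, not that it equals the first.

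A minor remark: the paper short-circuits $p\in\{3,5,7,13\}$ by citing J\"urgens, whereas you propose to redo those cases directly. That is a legitimate (if longer) alternative, and for $p\neq 11$ note that $L$ is already $p$-adically maximal, so Theorem~\ref{Thmax} gives $M=L$ and the real work is the genus enumeration of the fixed and moved sublattices. For $p=11$ your sketch is essentially the paper's approach.
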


\begin{proof}
	Satz 4.2.1 in \cite{Juergens} concludes that the only primes 
	that divide the order of $\Aut (L)$ are 2 and 11. 
	\\
	First we assume that there is $\sigma \in \Aut(L)$ of order 11. 
	Then by Theorem \ref{Thmax} there is some maximal lattice 
	$M$ with $\sigma \in \Aut(M)$. As $\det (L) =11^7$ is an odd power of
	$11$ the determinant of $M$ is 11. 
	There is one genus of even lattices of dimension 14 and determinant 11,
	its class number is 8 and there are 4 lattices admitting an
	automorphism of order 11, 2 of which have dual minimum $\geq 8/11$,
	these have dual minimum $12/11$ and the Sylow 11-subgroup of their
	automorphism group $G$ is of order 11. 
For both lattices we take $\sigma $ to be a generator of the Sylow 11-subgroup
of $G$ 
and compute the maximal $\sigma $-invariant sublattices $N$ of $M$.
	It turns out to be easier to dualize the picture: Let $D:=\ ^{(11)}M^{\#} $ denote 
the rescaled dual lattice of $M$ and 
	denote the action of $\sigma $ on  $D^{\#}/D
	\cong \F_{11}^{13}$ by $X \in \F_{11}^{13\times 13}$.
	Then $\ ^{(11)} N^{\#}/D $ is 
	a minimal $\sigma $-invariant isotropic subspace of $D^{\#}/D$. 
	As $\ ^{(11)} N^{\#}/D $  
	is a simple $\F_{11} \langle \sigma \rangle $-module, 
	the action of $\sigma $ on $\ ^{(11)} N^{\#}/D $  is trivial, so 
	$\ ^{(11)} N^{\#}  = \langle D , u \rangle $ where 
	$\langle u+D \rangle \leq D^{\#}/D$ 
	is a one-dimensional subspace in the kernel $K$ of $X-1 \in \F_{11}^{13\times 13}$.
	The normaliser $N_G(\langle \sigma \rangle )$ acts on these one-dimensional subspaces in $K$, admitting only one orbit represented by an 
	isotropic subspace (in both cases). This orbit corresponds 
	to a lattice $\ ^{(11)} N^{\#}  = \langle D , u \rangle $ of 
	minimum 2. As this is smaller than 8, this gives a contradiction. 
	\\
	Is remains to treat automorphisms of 2-power order.
	We first assume that there is $\sigma \in \Aut(L)$ with
	$\sigma ^2 = -1$. Then $\sigma $ acts on $L^{\#}/L \cong \F_{11}^{7}$ 
	with irreducible minimal polynomial $X^2+1 \in \F_{11} [X]$ contradicting
	the fact that 7 is odd (see also Remark \ref{divdeg}). 
	To finish the proof 
	we need to exclude  non-trivial automorphisms of order 2. 
	Let $\sigma $ be such an automorphism of det-type 
	$[2-(n_1,14-n_{-1})-s, 11-(d_1,7-d_{1})]$ (with $s\leq \min(n_1,14-n_1)$, 
	$s\equiv_2 n_1$). 
	Then the det-type of $(\ ^{(11)}L^{\#},\sigma )$ is 
	$[2-(n_1,14-n_{-1})-s, 11-(n_1-d_1, 7-n_{1}+d_{1})]$. 
Applying the strategy of Section \ref{ord2} we hence need to classify quadruples 
of 11-elementary
lattices $(U_{-1},U'_{-1},U_1,U_1')$ of minimum $\geq 4$ and dimensions $(n_1,n_1,14-n_1,14-n_1)$ 
and determinants $(11^{d_1},11^{n_1-d_1},11^{7-d_1},11^{7-n_1+d_1})$.
By symmetry we hence may assume that $n_1\leq 7$ and $d_1\leq n_1/2$. 

By \cite[Chapter 15,Theorem 13]{SPLAG} there are always two genera of positive
definite odd 
11-elementary lattices of given dimension $n$ and determinant $11^d$ 
(except for $n\leq 2$ or $n=d$, where there is just one such genus). 
There is an additional genus of positive definite even lattices if 
$n=2k$ is even and $k\equiv d \pmod{2}$.

Enumerating all these genera we find one possible lattice $U_{-1}$ of 
dimension 4, determinant $11^2$ and minimum 4 and three such lattices $U_{-1}$ 
of dimension 6, determinant $11^3$ and minimum 4. 
For all lattices $U_{-1}$ no proper overlattice of index 2 has minimum 4, so 
$L_{-1}= \ ^{(2)} U_{-1}$ and $s=n_{-1} = \dim (U_{-1}) $ in all four cases and 
$L_{1} $ contains $\ ^{(2)} U_1 $ as a proper sublattice. 
In particular we may choose $U_1$ as an odd lattice of dimension 10 respectively 8
and determinant $11^5$ respectively $11^4$.
The 11-adic lattice
$\Z_{11} \otimes _{\Z} (\ ^{(2)} U_{-1} \perp \ ^{(2)} U_1 ) = \Z_{11} \otimes _{\Z} L $
then determines the genus of $U_1$ completely. 
This allows to enumerate the 
 respective genera for $U_1$ and to  construct all candidates for the lattices 
 $L_{1}$ as overlattices of $\ ^{(2)} U_1$. 
The lattice $L$ is a full 
subdirect product of 
$L_{-1} \perp L_{1} $ and is constructed using the gluing strategy described for instance
in  \cite[Remark 2.5]{KirschmerNebe} and \cite{Nebe48}. 
The computations are done in the 
 master-thesis \cite{Voullie}; no extremal lattice is found.
\end{proof}

%

\section{Extremal 3-modular lattices of dimension 36} \label{dim36}

The existence of an extremal $3$-modular lattice of dimension $36$ is an interesting open problem. On one hand 36 is the only jump dimension, where the 
existence of an extremal lattice of minimum 8 is still open. 
On the other hand such a lattice would yield the densest known sphere packing 
in dimension 36. The third reason comes from a beautiful 
observation by B.Gross 
(see \cite[Introduction]{BachocNebe} for a more detailed explanation):
There is a connection between extremal modular lattices of 
level $7$, $3$, and $1$ based on
 the following chain of division algebras
$$E = \Q [\sqrt{-7} ] \subseteq Q = \left( \frac{-1,-3}{\Q } \right) 
\subseteq {\bf O}  $$
where $Q$  is the definite quaternion algebra over $\Q $ with discriminant 9 and ${\bf O}$ is the  non-associative Cayley octonion algebra over $\Q$. 
All three algebras have a unique conjugacy class of maximal orders
and we obtain unique embeddings 
$$\Z_E \hookrightarrow {\mathcal M} \hookrightarrow {\mathcal O} $$
of these maximal orders. 
For a Hermitian unimodular $\Z_E$-lattice $B$ of dimension $m$,  the 
 trace lattices 
$$\Tr (B) , \Tr (B\otimes _{\Z_E} {\mathcal M}) , 
 \Tr (B\otimes _{\Z_E} {\mathcal O}) $$ 
 are $\Z $-lattices which are $7$-modular of dimension $2m$, 
 $3$-modular of dimension $4m$ respectively unimodular of 
 dimension $8m$. 
 The sequence of dimensions is compatible to the jump dimensions for the
 respective $p$-modular lattices (multiples of $6$, $12$, resp. $24$). 
For $m=3 $ one obtains
the $7$-modular Barnes lattice in dimension $6$, 
the $3$-modular Coxeter-Todd lattice in dimension $12$ and the 
unimodular Leech lattice of dimension $24$.
These three  lattices are extremal of minimum $4$ and they are the densest 
(known) lattices in their respective dimension.
The same construction is applied to a rank $10$ lattice in \cite{BachocNebe}
to obtain extremal lattices of minimum $8$ in 
dimension $20$, $40$ and $80$.
For $m=6$ it is known that no extremal $7$-modular lattice 
of dimension 12 exists \cite{ScharlauHemkemeier} 
 and no extremal 3-modular $\Z $-lattice of dimension 24 
 is the trace lattice of a Hermitian 
 unimodular $\Z[\zeta_3]$-lattice (see \cite{Feit}). 
 Also for $m=9$ 
 the non-existence of an extremal $7$-modular lattice
in dimension  $18$ 
is known \cite{BachocVenkov}, but 
the discovery of a $72$-dimensional extremal
unimodular lattice in \cite{Nebe72} could nevertheless yield the 
hope to find a shorter sequence of 
extremal $3$-modular and unimodular lattices in dimension
$36$ and $72$. 

The dissertation of Michael J\"urgens \cite{Juergens} exhibits the 
possible automorphisms of an extremal $3$-modular lattice in dimension $36$,
whose existence is still open. In particular 
\cite[Section 4.2.3]{Juergens} shows that such an extremal lattice 
has no automorphisms of order $11$, $13$, or any prime $p\geq 23$ and specifies a unique possible 
det-type for automorphisms of order $17$ and $19$ .  
The paper 
\cite{KirschmerNebe} constructs binary Hermitian lattices over $\Z[\zeta_p]$ for
$p=17 $ and $p=19$ to conclude that such automorphisms do not exist.
So we know that the only primes that might divide the order of the
automorphism group of an extremal 3-modular lattice of dimension 36 are 
$\leq 7$.

\begin{proposition} \label{aut7} 
Let $L$ be an extremal $3$-modular lattice of dimension $36$ and 
let  $\sigma \in \Aut(L)$ be an automorphism of order $7$. 
Then $\mu _{\sigma} = \Phi _7$, i.e. $L$ is a lattice of dimension 
6 over $\Z[\zeta _7]$. 
\end{proposition}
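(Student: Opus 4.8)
The plan is to show that the type $7-(r_\zeta,n_1)-s$ of $(L,\sigma)$ has $n_1=0$, which is exactly the assertion $\mu_\sigma=\Phi_7$. So I would rule out, one by one, the possible types with $n_1>0$. As $L$ is extremal $3$-modular of dimension $36$, $\min(L)=8$ and $\det(L)=3^{18}$. Since $\sigma$ has order $7$, $n_1=36-6r_\zeta$ with $1\leq r_\zeta\leq 6$, so it is the values $n_1\in\{6,12,18,24,30\}$ that have to be excluded; Lemma~\ref{amal} gives $s\leq\min(n_1,r_\zeta)$, and because $7\nmid\det(L)$ also $s\equiv r_\zeta\pmod 2$.

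Next I would bring in the fixed lattice $L_1=L\cap V_1$ (dimension $n_1$) and its orthogonal complement $L_\zeta=L\cap V_\zeta$ (dimension $n_\zeta=6r_\zeta$); both are sublattices of $L$, hence $\min(L_1)\geq 8$ and $\min(L_\zeta)\geq 8$. Using the corollary $(L_1)^{\#}=L^{\#}e_1$ together with the inclusions $7Le_1\subseteq L_1\subseteq Le_1$ from Lemma~\ref{amal}, one sees that the $7$-Sylow subgroup of $L_1^{\#}/L_1$ equals $Le_1/L_1\cong\F_7^{s}$, so the $7$-part of $\det(L_1)$ is $7^{s}$, and the same holds for $L_\zeta$; at every prime $q\neq 3,7$ both $L_1$ and $L_\zeta$ are unimodular. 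Since $[L:L_1\perp L_\zeta]=7^{s}$ we have $\det(L_1\perp L_\zeta)=7^{2s}\det(L)$, whence $\det(L_1)=3^{a}7^{s}$ and $\det(L_\zeta)=3^{18-a}7^{s}$ with $a:=d_1(3)$, $18-a=d_\zeta(3)$, and clearly $0\leq a\leq n_1$, $0\leq 18-a\leq n_\zeta$. Remark~\ref{divdeg} applied to $L_\zeta$ forces $6=\mathrm{lcm}(2,o_7(3))$ to divide $18-a$, so $a\in\{0,6,12,18\}$. Finally, as $L$ is $3$-modular we have $L\cong\ ^{(3)}L^{\#}$, so by Lemma~\ref{dual} and Theorem~\ref{main1} the lattice $L$ also carries an order-$7$ automorphism of the same type but with $a$ replaced by $n_1-a$; it therefore suffices to treat the cases $a\leq n_1/2$.

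Then I would apply Hermite's inequality $\min(M)\leq\gamma_m\det(M)^{1/m}$ to both $L_1$ and $L_\zeta$, using the exact values $\gamma_6^6=64/3$ and $\gamma_{24}=4$ (the Leech lattice) and known upper bounds for $\gamma_{12},\gamma_{18},\gamma_{30}$. With $\min\geq 8$ and the restrictions on $a,s$, this numerical check eliminates $r_\zeta\in\{1,2,3\}$ completely and, among $r_\zeta\in\{4,5\}$, narrows the survivors to a short list whose extreme members are: $r_\zeta=4$ (so $n_1=12$), with $s=4$, $a=6$, which forces $L_1$ to be an even $12$-dimensional lattice of determinant $3^{6}7^{4}$ and minimum $\geq 8$; and $r_\zeta=5$ (so $n_1=6$), with $s=5$, $a=0$, which forces $L_1$ to be an even $6$-dimensional lattice of determinant $7^{5}$ and minimum $\geq 8$.

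It remains to dispose of these borderline possibilities, and this is where the real work sits. For each of them the Jordan decompositions of $\Z_3\otimes L$ and $\Z_7\otimes L$, together with the fact that $L/(L_1\perp L_\zeta)$ has order $7^{s}$ and exponent $7$, pin down the genus of $L_1$ exactly; one then enumerates this genus with the Kneser neighbour algorithm in {\sc Magma}, the class numbers being small, and checks that no isometry class attains minimum $8$. (In the $6$-dimensional case such a lattice would already be forced to lie within a few percent of the Hermite bound $\gamma_6\det(L_1)^{1/6}$, which is very restrictive, so the enumeration is short.) This gives $n_1=0$, and the proof is complete. The genuine obstacle, beyond identifying the exact discriminant form of $L_1$ at $3$ and at $7$ that the neighbour algorithm needs as input, is precisely that Hermite's inequality is a hair too weak to eliminate the last one or two genera on its own, so those short enumerations cannot be avoided.
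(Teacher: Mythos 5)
Your reduction phase is sound and is genuinely different from the paper's: instead of quoting the list of four admissible det-types from \cite{Juergens}, you rederive the combinatorial constraints from Lemma \ref{amal} ($s\le\min(n_1,r_\zeta)$, $s\equiv r_\zeta \bmod 2$), Remark \ref{divdeg} ($6\mid d_\zeta(3)$ since $o_7(3)=6$), and the strong modularity $L\cong{}^{(3)}L^{\#}$ via Theorem \ref{main1}, and then squeeze $L_1$ and $L_\zeta$ with Hermite-type bounds. I checked this bookkeeping: it does eliminate $r_\zeta\le 3$ and leaves exactly the two cases you name, namely $(n_1,s,d_1(3))=(12,4,6)$, which is the det-type $[7\mbox{-}(4,12)\mbox{-}4,\,3\mbox{-}(12,6)]$ the paper must also treat, and $(6,5,0)$, which is the rescaled-dual form of $[7\mbox{-}(5,6)\mbox{-}5,\,3\mbox{-}(12,6)]$. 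So up to this point you reach the same residual configurations as the paper, by more elementary means.

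The gap is in how you dispose of them. You assert the outcomes of genus enumerations you have not performed and cannot know in advance, and for the $12$-dimensional case the assertion is exactly where all the difficulty of the proposition lives. An even $12$-dimensional lattice of determinant $3^6 7^4$ and minimum $8$ would be denser than the Coxeter--Todd lattice $K_{12}$; no analytic bound excludes it (your own computation gives a Hermite bound $\approx 8.35>8$), the genus has determinant about $1.75\cdot 10^6$ and is certainly not of ``small class number'' (the paper already quotes class number $395$ for the $12$-dimensional genus of determinant $7^6$), and --- decisively --- by working only with the fixed lattice $L_1$ you throw away the one piece of structure that makes this case tractable: $\sigma$ acts with minimal polynomial $\Phi_7$ on $L_\zeta$, so $L_\zeta$ is a rank-$4$ Hermitian $\Z[\zeta_7]$-lattice in the genus $\II_{24}(3^{+12}7^{+4})$. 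The paper's proof exploits precisely this, passing to a maximal $\Z[\zeta_7]$-overlattice, which must be the Leech lattice or the even unimodular lattice with root system $A_6^4$ (the only $24$-dimensional even unimodular lattices with a fixed-point-free automorphism of order $7$), and then enumerating the roughly $14.4$ million relevant sublattices of index $p_7p_3$ to see that none has minimum $8$. Your route instead requires proving, by brute enumeration of an unstructured genus, that no $12$-dimensional lattice with that determinant beats $K_{12}$ --- a statement the paper neither needs nor establishes; if such a lattice existed without gluing to an extremal $36$-dimensional lattice, the proposition would still be true but your argument would simply stop. (The $6$-dimensional case is comparatively harmless: rescaled duality sends it to even determinant-$7$ lattices, a tiny genus around $A_6$, and the required dual minimum $8/7$ is a quick check; note the paper kills this case with no computation at all via Theorem \ref{main1} and the det-type list of \cite{Juergens}.) As written, then, the final step is a genuine gap: to close the $(12,4,6)$ case you need either the $L_\zeta$-side argument through the $\Z[\zeta_7]$-structure and maximal overlattices, or the restrictions imported from \cite{Juergens}, not an appeal to an enumeration whose feasibility and outcome are both unsubstantiated.
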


\begin{proof}
By \cite{Juergens} the only possible det-types of $(L,\sigma )$ are 
	$$\begin{array}{l} \mbox{ [7-(4,12)-4,3-(12,6)], [7-(5,6)-3,3-(12,6)], } \\ \mbox{ [7-(5,6)-5,3-(12,6)] , and  [7-(6,0)-0,3-(18,0)]. }\end{array} $$
As both lattices, $L$ and $\ ^{(3)}L^{\#} $ are extremal, this allows
us to apply Theorem \ref{main1} to conclude that the second and the 
third case is not possible. 
It remains to deal with the first case. 
Here the lattice $L_{\zeta} $ is in the genus 
$\II_{24}(3^{+12}7^{+4}) $ (see \cite{Juergens}) 
and $\sigma $ acts with minimal polynomial
$\Phi _7$ on $L_{\zeta }$, so we may consider $L_{\zeta } $ as 
a lattice of rank $r_{\zeta } = 4$ over $\Z[\zeta _7]$. 
Let $M_{\zeta }$ be a maximal even $\Z[\zeta _7]$-overlattice of $L_{\zeta }$. 
Then $M_{\zeta }$ is an even unimodular lattice of dimension 24
having an automorphism $\sigma $ with minimal polynomial $\Phi _7$. 
There are two such lattices, the Leech lattice $\Lambda _{24}$  and
the lattice with root system $A_6^4$. 
For both lattices there is up to conjugacy a unique automorphism 
$\sigma $ with the correct minimal polynomial. So $L_{\zeta }$ is
a sublattice of index $p_7 p_3$ of one of these two maximal 
$\Z[\zeta _7]$-lattices. We first construct all representatives of 
the isometry classes in the set of sublattices of index $p_7$, 
there are five such isometry classes of $\Z[\zeta _7]$-lattices 
of determinant $7^4$ (the mass of the genus of Hermitian lattices is 
$\frac{395}{345744}$). 
For all these five lattices we construct all  
$14,393,320$ sublattices $L_{\zeta }$ of
index $p_3$ of level $21$, without testing isometry. None of them 
has minimum $8$. 
\end{proof}

\begin{corollary}
The genus
$\II_{24}(3^{+12}7^{+4}) $ contains no lattice $L_{\zeta }$ of 
minimum $8$ admitting an automorphism $\sigma $ with minimal polynomial
$\Phi _7$.
\end{corollary}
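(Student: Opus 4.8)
The corollary is essentially a restatement of what was established inside the proof of Proposition~\ref{aut7}, so the plan is to extract exactly that part of the argument and present it as a stand-alone deduction. First I would observe that if such a lattice $L_\zeta$ existed in the genus $\II_{24}(3^{+12}7^{+4})$ with $\sigma$ acting with minimal polynomial $\Phi_7$, then $L_\zeta$ becomes a Hermitian $\Z[\zeta_7]$-lattice of rank $r_\zeta = 4$ and determinant (up to the relevant normalisation) encoding the $3$- and $7$-parts of $\det L_\zeta$. Passing to a maximal even $\Z[\zeta_7]$-overlattice $M_\zeta$ as in Section~\ref{max} (Theorem~\ref{Thmax} applied $7$-adically, together with the fact that the even unimodular overlattice is forced since the $3$-part must get absorbed), one lands inside an even unimodular $24$-dimensional lattice carrying an automorphism with minimal polynomial $\Phi_7$.

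The next step is the classification input: among the $24$ even unimodular lattices of dimension $24$, only the Leech lattice $\Lambda_{24}$ and the lattice with root system $A_6^4$ admit an automorphism $\sigma$ with $\mu_\sigma = \Phi_7$, and in each case $\sigma$ is unique up to conjugacy. Hence $L_\zeta$ sits as a $\Z[\zeta_7]$-sublattice of index $p_7 p_3$ (with $p_7, p_3$ the primes of $\Z[\zeta_7]$ above $7$ and $3$) inside one of these two lattices. I would then carry out the same finite enumeration: first the $\Z[\zeta_7]$-sublattices of index $p_7$ — there are five isometry classes, consistent with the mass $\tfrac{395}{345744}$ of the relevant Hermitian genus — and then, for each, all sublattices of index $p_3$, of which there are $14{,}393{,}320$. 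Checking minima, none reaches $8$, which is the contradiction.

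Strictly speaking, since this is exactly the content of the ``first case'' in the proof of Proposition~\ref{aut7}, the cleanest write-up is simply: the argument given there for the det-type $[7\text{-}(4,12)\text{-}4,\,3\text{-}(12,6)]$ never used anything about the ambient $36$-dimensional $L$ beyond the structure of $L_\zeta$, and so it proves the corollary verbatim. The main (and only real) obstacle is the computational one already surmounted in the proposition — the enumeration of the $\approx 1.4\times 10^7$ index-$p_3$ sublattices and the verification that all have minimum $<8$; everything else is a short chain of standard reductions (maximal overlattice, Krull--Schmidt for $\Z_7[\sigma]$, and the known list of $24$-dimensional even unimodular lattices with a $\Phi_7$-automorphism).
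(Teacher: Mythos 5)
Your proposal is correct and follows the paper exactly: the paper gives no separate proof of this corollary, treating it as an immediate consequence of the ``first case'' computation in the proof of Proposition~\ref{aut7}, which — as you note — only uses the structure of $L_{\zeta}$ (maximal even $\Z[\zeta_7]$-overlattice, the two unimodular lattices $\Lambda_{24}$ and $A_6^4$ with a $\Phi_7$-automorphism, and the enumeration of the index-$p_7$ and index-$p_3$ sublattices), never the ambient $36$-dimensional lattice. Your stand-alone write-up is the same argument, so there is nothing to add.
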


The possible det-types of automorphisms of order $5$ of an extremal
$3$-modular lattice of dimension 36 are listed in \cite{Juergens} as 
$$ \begin{array}{l} 
	\mbox{ [5-(5,16)-5, 3-(8,10) ], 
[5-(6,12)-6, 3-(8,10) ], 
	[5-(6,12)-6, 3-(12,6) ], } \\
	\mbox{ [5-(7,8)-5, 3-(12,6) ], 
[5-(7,8)-7, 3-(12,6) ],  and 
[5-(8,4)-4, 3-(16,2) ].} \end{array}$$ 
By Theorem \ref{main1} only the third and the last possibility can occur.
A computation excluding the  possibilities for the third
case hence shows the following theorem. 

\begin{theorem} \label{aut5} 
Let $L$ be an extremal $3$-modular lattice of dimension $36$ and
$\sigma \in \Aut(L)$ be an automorphism of order $5$. 
Then the det-type of $(L,\sigma )$ is 
$[5-(8,4)-4, 3-(16,2) ]$.
\end{theorem}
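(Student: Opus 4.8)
The plan is to combine the duality constraint of Theorem \ref{main1} with an explicit enumeration of Hermitian $\Z[\zeta_5]$-lattices, in the style of Proposition \ref{aut7}. First I would note that, since $L$ is extremal strongly $3$-modular, so is its rescaled dual $\ ^{(3)}L^{\#}$, and $\sigma$ is an automorphism of both; hence the det-type of $(\ ^{(3)}L^{\#},\sigma)$ must again occur in the list of six det-types above. By Theorem \ref{main1} the $5$-part $[5-(r_{\zeta},n_1)-s]$ is unchanged and the $3$-part $3-(d_{\zeta},d_1)$ is replaced by $3-(n_{\zeta}-d_{\zeta},\,n_1-d_1)$, where $n_{\zeta}=4r_{\zeta}$ and $n_1=36-n_{\zeta}$. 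Running through the list: the first det-type dualises to a triple with $(r_{\zeta},n_1)=(5,16)$ and $3$-part $(12,6)$, the second to $(6,12)$ with $3$-part $(16,2)$, and the fourth and fifth both to $(7,8)$ with $3$-part $(16,2)$; none of these combinations appears, so these four det-types are impossible. The third, $[5-(6,12)-6,\,3-(12,6)]$, and the last, $[5-(8,4)-4,\,3-(16,2)]$, are fixed by the map of Theorem \ref{main1} and survive this first step.

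Next I would exclude the third det-type. If $(L,\sigma)$ has det-type $[5-(6,12)-6,\,3-(12,6)]$, then $L_{\zeta}=L\cap V_{\zeta}$ is a $\Z[\zeta_5]$-lattice of rank $r_{\zeta}=6$, i.e.\ a positive definite $\Z$-lattice of dimension $24$ with $\min(L_{\zeta})\geq\min(L)=8$; exactly as in Proposition \ref{aut7} its genus is pinned down: it is a definite genus of dimension $24$ with discriminant group $\F_3^{12}$ at $3$ and $\F_5^{r_{\zeta}}=\F_5^{6}$ at $5$, and since $3$ is inert in $\Q(\zeta_5)$ ($o_5(3)=4$) the discriminant form at $3$ is a $3$-dimensional quadratic space over $\F_{81}$. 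I would then pass to a maximal even $\Z[\zeta_5]$-overlattice $M_{\zeta}\supseteq L_{\zeta}$; by Theorem \ref{Thmax}, applied prime by prime, $M_{\zeta}$ is a maximal $\Z[\zeta_5]$-lattice, so its determinant is small — an anisotropic $\F_{81}$-line at $3$, contributing $3^{4}$, together with a small block at the ramified prime above $5$ — and it still carries $\sigma$ with minimal polynomial $\Phi_5$. There are only finitely many such $M_{\zeta}$ up to isometry; I would list them with the Hermitian Kneser neighbour algorithm, using the mass formula to certify completeness, and then, for each one, run through the $\sigma$-invariant sublattices of the (bounded) relevant index, descending one elementary layer at a time and using $N_{G}(\langle\sigma\rangle)$ to collapse orbits where that helps, exactly as in the proof of Proposition \ref{aut7}. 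One then checks that no such sublattice has minimum $\geq 8$; since any $L$ with the third det-type would produce such an $L_{\zeta}$, this case cannot occur, and the det-type of $(L,\sigma)$ must be $[5-(8,4)-4,\,3-(16,2)]$.

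The hard part will be making the sublattice enumeration in the second step actually terminate in reasonable time: in the analogous order-$7$ computation of Proposition \ref{aut7} one already meets some fourteen million sublattices, so the descent must be organised carefully — dualising when that shrinks the search space, reducing by the normaliser $N_{G}(\langle\sigma\rangle)$, and never enumerating more than one elementary step at once. A secondary difficulty is to determine the local invariants of $L_{\zeta}$ at the ramified prime $5$ precisely enough to be certain that the resulting list of candidate overlattices $M_{\zeta}$ is complete.
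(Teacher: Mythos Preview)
Your first step, the duality sieve via Theorem \ref{main1}, is correct and is exactly what the paper does in the paragraph preceding the theorem: only the det-types $[5-(6,12)-6,\,3-(12,6)]$ and $[5-(8,4)-4,\,3-(16,2)]$ survive.

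The second step, however, contains a real gap. Your plan is to enumerate the Hermitian $\Z[\zeta_5]$-lattices $L_{\zeta}$ in the relevant genus (going up to a $3$-maximal Hermitian overlattice with $3$-part $3^{4}$ and then descending) and to conclude by finding that none has minimum $\geq 8$. This expectation is wrong: the paper carries out essentially the same enumeration---classifying the $222$ Hermitian classes in $\II_{24}(5^{-6}3^{-4})$, cutting to $132$ by the condition $\min(L^{\#,3})\geq 8/3$, and descending by index $3^{4}$---and finds \emph{three} isometry classes of candidates $L_{\zeta}$ with $\min(L_{\zeta})=8$. So the $L_{\zeta}$-side alone does not close the case.

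What you are missing is the fixed lattice $L_{1}$. For det-type $[5-(6,12)-6,\,3-(12,6)]$ one has $n_{1}=12$, $s=6$, $d_{1}(3)=6$, so $L_{1}$ is a $12$-dimensional even $15$-elementary lattice of determinant $15^{6}$ with $\min(L_{1})\geq 8$ and $\min(L_{1}^{\#,3})\geq 8/3$. By the Remark following Theorem \ref{dim12ext15} these conditions force $L_{1}$ to be one of the three extremal strongly $15$-modular lattices classified there. The paper then runs over the $3\times 3$ pairs $(L_{\zeta},L_{1})$, constructs all $3$-modular overlattices of $L_{\zeta}\perp L_{1}$ (the gluing of Lemma \ref{amal}), and checks that none has minimum $8$. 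Only this last gluing step, which depends on the earlier classification in Theorem \ref{dim12ext15}, finishes the proof; your plan as written would stall at the three surviving $L_{\zeta}$'s.
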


\begin{proof} 
	As explained above we only need to exclude the 
	possibility that the det-type of $(L,\sigma )$ is
$[5-(6,12)-6, 3-(12,6) ] $.
So assume that we have such an automorphism $\sigma $ of order 5.

	Then the lattice 
	$L_{\zeta } $ is in the genus $\II _{24} (5^{-6} 3^{-12} )$ 
	(see \cite{Juergens}). To classify the relevant 
	$\Z [\zeta_5] $-lattices in this genus we first 
	classify the $\Z[\zeta _5]$-lattices in the 
	genus $\II _{24} (5^{-6} 3^{-4} )$. 
	The mass of this Hermitian genus is $577524389/405000000$ 
	and its class number is $222$. 
	Only $132$ of these $222$ lattices $L$ have the additional
	property that $\min (L^{\#,3}) \geq 8/3$. 
	For these $132$ lattices we compute the maximal $\zeta _5$-invariant
	sublattices of index $3^4$ that have minimum 8. 
	There are in total $3$ isometry classes of such lattices $M$,
	all satisfying $\min (M^{\# ,3}) \geq 8/3$. 
	These three lattices are candidates for $L_{\zeta }$. 

	For these lattices we computed all the 3-modular overlattices 
	of $L_{\zeta } \perp L_1$ where $L_1$ is one of the three
	extremal strongly $15$-modular lattices from Theorem \ref{dim12ext15}.
	None of the $3$-modular overlattices has minimum 8. 
\end{proof} 

The last theorem summarizes our present knowledge.
\begin{theorem}
	Let $\sigma $ be an 
automorphisms of prime order of an extremal $3$-modular lattice  $L$ 
of dimension $36$. 
Then the order of $\sigma $ is $2,3,5,$ or $7$. 
If $\sigma $ has order $7$ then it acts fixed point freely, i.e.
with minimal polynomial $\Phi _7$. 
If the order of $\sigma $ is $5$, then the det-type of 
$(L,\sigma )$ is  $[5-(8,4)-4, 3-(16,2) ]$.
\end{theorem}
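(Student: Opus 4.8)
The plan is to assemble the statement from results already in place, handling its three clauses separately; no new idea is needed beyond careful bookkeeping. For the bound on the order, I would invoke the discussion preceding Proposition \ref{aut7}: the dissertation \cite{Juergens} shows that an extremal $3$-modular lattice $L$ of dimension $36$ admits no automorphism of order $11$, none of order $13$, and none of any prime order $p\geq 23$, and it moreover pins down a \emph{unique} possible det-type for an automorphism of order $17$ and a unique one for order $19$. Feeding these two det-types into the construction of binary Hermitian $\Z[\zeta_{17}]$- and $\Z[\zeta_{19}]$-lattices carried out in \cite{KirschmerNebe} shows that no such automorphism exists. Hence the only primes that can divide $|\Aut(L)|$ --- in particular the only possible orders of $\sigma$ --- are $2,3,5,7$, which is the first clause.

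Next, the order-$7$ clause is exactly Proposition \ref{aut7}, which asserts $\mu_\sigma=\Phi_7$. Its proof starts from the four candidate det-types listed in \cite{Juergens}; since $L$ and $\ ^{(3)}L^{\#}$ are both extremal, Theorem \ref{main1} rules out two of them, and the one remaining det-type other than $\mu_\sigma=\Phi_7$, namely $[7-(4,12)-4, 3-(12,6)]$, is excluded by a finite computation: $L_\zeta$ would then be a $\Z[\zeta_7]$-sublattice of index $p_7 p_3$ in one of the only two even unimodular $24$-dimensional lattices carrying an order-$7$ isometry of minimal polynomial $\Phi_7$ (the Leech lattice and the lattice with root system $A_6^4$), and none of the finitely many such sublattices has minimum $8$. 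Thus the surviving case forces $\sigma$ to act fixed-point-freely, which is the second clause.

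Finally, the order-$5$ clause is Theorem \ref{aut5}. Here \cite{Juergens} provides six candidate det-types; Theorem \ref{main1}, again via extremality of $L$ and $\ ^{(3)}L^{\#}$, reduces them to two, and the alternative $[5-(6,12)-6, 3-(12,6)]$ is eliminated by the computation in the proof of Theorem \ref{aut5}: one classifies the Hermitian genus $\II_{24}(5^{-6}3^{-4})$, keeps the $132$ lattices whose $3$-dual has minimum $\geq 8/3$, forms the $\zeta_5$-invariant sublattices of index $3^4$ of minimum $8$, and checks that no $3$-modular overlattice of $L_\zeta\perp L_1$ has minimum $8$ when $L_1$ runs over the three extremal strongly $15$-modular lattices of Theorem \ref{dim12ext15}. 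This leaves only $[5-(8,4)-4, 3-(16,2)]$, proving the last clause. There is no genuinely new obstacle: the substance lies in the already-completed computations underlying Proposition \ref{aut7} and Theorem \ref{aut5} together with the Hermitian-lattice enumerations over $\Z[\zeta_{17}]$ and $\Z[\zeta_{19}]$ in \cite{KirschmerNebe}, and the present argument is just their summation.
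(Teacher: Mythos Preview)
Your proposal is correct and matches the paper's intent: this final theorem is explicitly presented as a summary (``The last theorem summarizes our present knowledge''), with no separate proof given, so the argument is precisely the collation of the preceding discussion (J\"urgens and \cite{KirschmerNebe} for primes $>7$), Proposition~\ref{aut7} for order $7$, and Theorem~\ref{aut5} for order $5$ that you have written out.
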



\bibliography{dim36}

\end{document}